\newtheorem{theorem}{Theorem}[section]
\newtheorem{lemma}[theorem]{Lemma}
\newtheorem{proposition}[theorem]{Proposition}
\theoremstyle{definition}\newtheorem{clm}[theorem]{Claim}
\newcommand{\ba}{\begin{array}}
\newcommand{\ea}{\end{array}}
\def \qed{\cqfd}
\def \b{\beta}
\def \psh{\it PSH(\O )}
\def \O{\Omega}
\def \co{\it\mathbb C^n}
\def \d{\delta}
\def \bj{\bar{j}}
\def\al{\alpha}
\def \C{\mathbb C}
\DeclareMathOperator \la {{\it\lambda}}
\DeclareMathOperator \dist {dist}
\def\qed{\vbox{\hrule
\hbox{\vrule\hbox to 5pt{\vbox to 8pt{\vfil}\hfil}\vrule}\hrule}}
\newcommand{\beg}{\begin{eqnarray*}}
\newcommand{\begn}{\begin{eqnarray}}
\newcommand{\en}{\end{eqnarray*}}
\newcommand{\enn}{\end{eqnarray}}
\begin{document}
\title{A $\mathcal C^{2,\alpha}$ estimate of the complex Monge-Amp\`ere equation}
\subjclass[]{32W20}
\keywords{Complex Monge-Amp\`ere equation,
 interior
regularity, interior $\mathcal C^{2,\alpha}$ estimate.
}
\author{Chao Li}
\address{School of Mathematical Sciences\\
University of Science and Technology of China\\
Hefei, 230026,P.R. China\\ } \email{leecryst@mail.ustc.edu.cn}
\author{Jiayu Li}
\address{Key Laboratory of Wu Wen-Tsun Mathematics\\ Chinese Academy of Sciences\\School of Mathematical Sciences\\
University of Science and Technology of China\\
Hefei, 230026\\ and AMSS, CAS, Beijing, 100080, P.R. China\\} \email{jiayuli@ustc.edu.cn}
\author{Xi Zhang}
\address{Key Laboratory of Wu Wen-Tsun Mathematics\\ Chinese Academy of Sciences\\School of Mathematical Sciences\\
University of Science and Technology of China\\
Hefei, 230026,P.R. China\\ } \email{mathzx@ustc.edu.cn}
\thanks{The authors were supported in part by NSF in
China,  No.11625106, 11571332, 11526212.}

\begin{abstract} In this paper,  we prove a  $\mathcal C^{2,\alpha}$-estimate for the solution  to the complex Monge-Amp\`ere equation
 $\det(u_{i\bar{j}})=f$ with
 $0< f\in \mathcal C^{\alpha}$, under the assumption that $u\in \mathcal C^{1,\beta }$ for some $\beta <1$ which depends on $n$ and $\alpha $.
\end{abstract}

\maketitle

\section{Introduction}
\setcounter{equation}{0}

\hspace{0.4cm}

The complex Monge-Amp\`ere equation has been the subject of
intensive studies in the past forty years,   because its significant applications in complex analysis and complex geometry.
In order to prove  the Calabi's conjecture (\cite{Cal0}),  Yau (\cite{Y}) solved a complex Monge-Amp\`ere equation on K\"ahler manifolds.
Constructions of K\"ahler-Einstein metrics  by means of solving appropriate complex Monge-Amp\`ere
equations under
suitable smoothness assumptions were carried out by many people (see e.g. \cite{Au,CY,MY,Kob,TY1,TY2,TY3,TY4,Ti1,Ti2}). In \cite{CKNS}, Caffarelli, Kohn, Nirenberg, and Spruck solved the Dirichlet problem of the complex Monge-Amp\`ere equation with smooth data on a smooth, bounded, strongly pseudoconvex domain in $\C^{n}$. Guan (\cite{Gb}) generalized their result to the case that there is a suitable subsolution.
To overcome
regularity problem for the homogeneous complex Monge-Amp\`ere equation arising
from the Chern-Levine-Nirenberg intrinsic norms in \cite{CLN}, Bedford-Taylor (\cite{BT,BT1,BT2}) developed
the theory of weak solutions.
Kolodziej (\cite{Ko,Ko1})
proved the existence and H\"older estimate of  solution to the complex Monge-Amp\`ere
equation when the right hand side is a  nonnegative $L^{p}$ function for $p>1$.
There are further  existence and regularity results on the complex Monge-Amp\`ere equation with right hand side less regular or  degenerate,
 see references \cite{Bl1,Ch,GPF,Bl2,ZZ,TZ,He,Di,DP,EGZ} for details.

\medskip

In this paper we investigate the interior $\mathcal{C}^{2, \alpha }$-estimates for solutions to the  complex Monge-Amp\`ere equation with $\mathcal{C}^{\alpha }$ strictly positive right hand side.
For the case of the real Monge-Amp\`ere equation, sharp interior $\mathcal{C}^{2, \alpha }$-estimates were established by Caffarelli (\cite{Caf})  and sharp boundary estimates
were subsequently obtained by Trudinger and Wang (\cite{TW}).
Their arguments rely essentially  on tools in convex analysis which are not available in the complex setting.
This
accounts for  significant difference between the theories of real and
complex Monge-Amp\`ere equation.

Let $\O$ be a domain in $\co$ and $u$ be a solution of the complex Monge-Amp\`ere
equation
\begin{equation}\label{CMA}
\det(u_{i\bar{j}})=f,\end{equation}
on $\O$, where $f$ is a positive function.
 Yau (\cite{Y}) used
Calabi's $\mathcal C^{3}$-estimate  to establish the $\mathcal{C}^{2, \alpha }$ estimate, which depends on third derivatives of
$f$.  Siu (\cite{Si}) used the Evans-Krylov approach (\cite{Ev,Kr}) to get the $\mathcal{C}^{2, \alpha }$ estimate which depends on
second derivatives of $f$ (also \cite{Tr}). In \cite{Ti0}, Tian established  a new $\mathcal{C}^{2, \alpha }$ estimate which depends on the H\"older norm of $f $ and lower bound of $\Delta f $. If $f$ belongs to $\mathcal{C}^{0,1}$
or even to some Sobolev space $W^{1,p}$ for $p>2n$, there are also related estimates due to Blocki (\cite{Bl1}) and Chen-He (\cite{CH}).  In \cite{DZhZh}, Dinew, Zhang and the third-named author obtained  a $\mathcal{C}^{2, \alpha }$ estimate depending on the H\"older bound of $f$ and the bound for the
real Hessian of $u$, and this estimate is optimal according to the H\"older exponent, i.e. they proved the following theorem:

\medskip

\begin{theorem}\label{thm1}{\bf (\cite{DZhZh})}
 Let $\O$ be a domain in $\co$ and $u\in\psh\cap\ \mathcal C^{1,1}(\O)$ statisfy the complex Monge-Amp\`ere
equation (\ref{CMA}) in $\O$.
Suppose additionally that $f\geq\la>0$ in $\O$ for some constant $\la$ and $f\in\mathcal{C}^{\al}(\O)$ for some $\al\in(0,1)$. Then $u\in\mathcal C^{2,\al}(\O)$. Furthermore the $\mathcal C^{2,\al}$ norm of $u$ in any relatively compact subset is estimable in terms of $n, \al, \la, \|u\|_{\mathcal C^{1,1}(\O)}, \|f\|_{\mathcal C^{\al}(\O)}$ and the distance of the set to $\partial \O$.
\end{theorem}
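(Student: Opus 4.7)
I would implement a Caffarelli-style perturbation-and-iteration scheme. The $\mathcal{C}^{1,1}$ bound on $u$ together with $\det(u_{i\bar{j}})=f\geq\lambda>0$ forces the eigenvalues of the complex Hessian $(u_{i\bar{j}})$ into a fixed compact subset of $(0,\infty)$, so the operator $F(M):=\log\det(M)$ is uniformly elliptic and concave on the relevant range of Hermitian matrices, and equation (\ref{CMA}) reads $F(u_{i\bar{j}})=\log f$ with a $\mathcal{C}^\alpha$ right-hand side.

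\textbf{Frozen comparison.} Fix $x_0\in\Omega$ and a small ball $B_r(x_0)\Subset\Omega$, and let $v$ solve the Dirichlet problem
\[
\det(v_{i\bar{j}})=f(x_0)\quad\text{in }B_r(x_0),\qquad v=u\quad\text{on }\partial B_r(x_0).
\]
Using $u$ as a barrier and the boundary estimates of Caffarelli-Kohn-Nirenberg-Spruck, $v$ inherits a $\mathcal{C}^{1,1}$ bound comparable to that of $u$. Because the right-hand side is constant, bootstrapping from Evans-Krylov makes $v$ smooth inside $B_{r/2}(x_0)$, and rescaling $\tilde{v}(y)=r^{-2}v(x_0+ry)$ yields interior estimates $[v_{i\bar{j}}]_{\mathcal{C}^{\alpha_0}(B_{r/2}(x_0))}\leq C r^{-\alpha_0}$ for any chosen $\alpha_0\in(\alpha,1)$, with $C$ depending only on $n,\lambda,\alpha_0$ and $\|u\|_{\mathcal{C}^{1,1}}$.

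\textbf{Linearized error and approximation improvement.} Writing $w=u-v$ and using concavity of $\log\det$,
\[
\log f(z)-\log f(x_0)=\log\det(u_{i\bar{j}})-\log\det(v_{i\bar{j}})=a^{\bar{j}i}(z)\,w_{i\bar{j}}(z),
\]
where $a^{\bar{j}i}(z)=\int_0^1\bigl[(1-t)(v_{k\bar{l}})+t(u_{k\bar{l}})\bigr]^{-1}\,dt$ is uniformly elliptic and bounded measurable, one sees that $w$ satisfies a linear uniformly elliptic PDE with null boundary data and right-hand side of size $Cr^\alpha$. The Alexandrov-Bakelman-Pucci estimate gives $\sup_{B_r(x_0)}|w|\leq Cr^{2+\alpha}$. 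Letting $P$ be the quadratic Taylor polynomial of $v$ at $x_0$ and combining the two bounds yields, for $\rho<r/2$,
\[
\sup_{B_\rho(x_0)}|u-P|\leq C\bigl(r^{-\alpha_0}\rho^{2+\alpha_0}+r^{2+\alpha}\bigr).
\]
Since $\alpha_0>\alpha$, picking $\rho=\theta r$ with $\theta$ sufficiently small produces the sharp improvement $\sup_{B_{\theta r}(x_0)}|u-P|\leq\tfrac{1}{2}(\theta r)^{2+\alpha}$.

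\textbf{Iteration and main obstacle.} Applying this improvement at the geometric scales $r_k=\theta^k r_0$ centered at $x_0$ produces a Cauchy sequence of quadratic polynomials $P_k$ with $\det((P_k)_{i\bar{j}})=f(x_0)$ and $|(P_{k+1})_{i\bar{j}}-(P_k)_{i\bar{j}}|\leq Cr_k^\alpha$; their limit is the second-order Taylor polynomial of $u$ at $x_0$ and satisfies the pointwise $\mathcal{C}^{2,\alpha}$ characterization with uniform constants. Since $x_0\in\Omega$ is arbitrary, a standard covering argument yields the interior estimate in the stated form. The chief technical hurdle is the frozen-comparison step: one needs interior $\mathcal{C}^{2,\alpha_0}$ estimates for $v$ with $\alpha_0$ strictly larger than the given exponent $\alpha$, and with constants independent of the freezing point $x_0$ and the scale $r$. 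This is secured by combining Evans-Krylov regularity with the smoothness of the constant right-hand side and the scale invariance of the frozen Monge-Amp\`ere equation, but it requires careful bookkeeping of the ellipticity constants throughout the bootstrap so that the perturbation iteration closes with a genuine gain in the H\"older exponent.
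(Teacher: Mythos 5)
Your route is a Caffarelli-type perturbation scheme, which is a legitimate alternative to the argument in \cite{DZhZh} (which, following Wang, solves the frozen Dirichlet problem $\det(v_{i\bar j})=f(x_0)$ in nested dyadic balls with boundary data $u$ and directly sums the differences of consecutive approximants, as this paper does in its own Sections 3--4). However, your improvement step has a genuine gap, and as written the iteration does not close.

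You apply the interior Evans--Krylov estimate to $v$ itself, getting $[v_{i\bar j}]_{\mathcal C^{\alpha_0}(B_{r/2})}\le Cr^{-\alpha_0}$ from the uniform $\mathcal C^{1,1}$ bound, and then bound the Taylor error of $v$ by $Cr^{-\alpha_0}\rho^{2+\alpha_0}$. With $\rho=\theta r$ this is $C\theta^{2+\alpha_0}r^2$, which is $O(r^2)$, not $O(r^{2+\alpha})$: the requirement $C\theta^{2+\alpha_0}r^2\le\tfrac14(\theta r)^{2+\alpha}$ reduces to $\theta^{\alpha_0-\alpha}\lesssim r^{\alpha}$ and therefore fails as $r\to0$, no matter how small a fixed $\theta$ is chosen. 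The correct scaling must exploit the smallness of $v-P_k$ (of order $r^{2+\alpha}$ by the ABP/maximum-principle bound together with the inductive hypothesis $\sup_{B_{r}}|u-P_k|\le A\,r^{2+\alpha}$), not just the $\mathcal C^{1,1}$ bound on $u$. Writing $w=v-P_k$, which satisfies the constant-coefficient fully nonlinear equation $\det((P_k)_{i\bar j}+w_{i\bar j})=f(x_0)$ with $\|w\|_{L^\infty(B_r)}\lesssim r^{2+\alpha}$, rescaling by $r^{-2}$ and applying interior estimates gives $[\,w_{i\bar j}\,]_{\mathcal C^{\alpha_0}(B_{r/2})}\le Cr^{\alpha-\alpha_0}$, so the Taylor error becomes $C\theta^{2+\alpha_0}r^{2+\alpha}$; this does pick up the factor $r^\alpha$ needed to beat $(\theta r)^{2+\alpha}$ once $\theta$ is fixed with $C\theta^{\alpha_0-\alpha}\le\tfrac12$. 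A second, more minor issue: the claimed improvement to $\tfrac12(\theta r)^{2+\alpha}$ cannot absorb the term $Cr^{2+\alpha}$ from the comparison estimate, because $C$ (which depends on $[f]_{\mathcal C^\alpha}$ and the ellipticity) exceeds $\tfrac12\theta^{2+\alpha}$. The standard remedy is to iterate the weaker statement $\sup_{B_{r_k}}|u-P_k|\le A\,r_k^{2+\alpha}$ for a sufficiently large but fixed prefactor $A$: the inequality $C'(A+C)\,\theta^{\alpha_0-\alpha}+C\theta^{-(2+\alpha)}\le A$ can then be arranged by first fixing $\theta$ small and then $A$ large. With these two corrections the iteration closes and gives $|D^2P_{k+1}-D^2P_k|\lesssim r_k^\alpha$, whence the Campanato-type pointwise $\mathcal C^{2,\alpha}$ characterization at $x_0$ and, by the covering argument, the interior estimate.

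The frozen-comparison step is fine in outline (a barrier or ABP gives $\sup_{B_r}|u-v|\le C[f]_{\mathcal C^\alpha}r^{2+\alpha}$, and Bedford--Taylor/Evans--Krylov furnish the needed interior regularity of $v$), and the linearization via concavity of $\log\det$ together with the a.e.\ Hessian of the $\mathcal C^{1,1}$ function $u$ is legitimate. The structural issues are entirely in the normalization of the perturbation step and the constant tracking, both of which you flagged informally as the main technical hurdle but did not actually resolve.
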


\medskip

In \cite{WY}, Wang used the Krylov's technique (\cite{Kr}) to reduce the complex Monge-Amp\`ere equation  to a real Bellman-type equation, and then he applied Caffarelli's estimates (\cite{Caf0}) and the above estimate to obtain a $\mathcal{C}^{2, \alpha }$ estimate depending on $\|\Delta u\|_{L^{\infty}(\Omega )}$ instead of $\|u\|_{\mathcal C^{1,1}(\O)}$. Recently, Chen-Wang (\cite{ChW}) presented a new proof to this estimate.  Tosatti-Wang-Weinkove-Yang (\cite{TWWY}) and Chu (\cite{Chu}) extended this estimate to some other fully nonlinear elliptic equations in complex geometry.
 It remains an interesting problem to see whether  the condition  can be weakened to  $u\in\mathcal C^{1,\gamma}(\O)$ for $1>\gamma >1-2/n$ (Pogorelov type examples in \cite{Bl} show that this exponent would be optimal). In this paper, we weaken the condition to $u\in \mathcal C^{1,\beta }$ for some $\beta <1$, this  partially answers the above  problem.
Below we state our result:

\medskip

\begin{theorem}\label{thm2}
 Let $\O$ be a domain in $\co$ and $u\in\psh\cap\ \mathcal C(\O)$ be a weak solution of the complex Monge-Amp\`ere
equation (\ref{CMA}) in $\O$
with $0<\lambda \leq f \in\mathcal C^{\al}(\O) $ for some constant $\la$ and  some $\al\in(0,1)$.
If  $u \in \mathcal{C}^{1,\beta}(\O)$ with $\beta \in (\beta_0,1)$, where
\begin{equation}\label{b0}\beta_0=\beta_0(n,\alpha)=1-\frac{\alpha}{n(2+\alpha)-1}.\end{equation}
Then $u\in\mathcal C^{2,\al}(\O)$. Furthermore the $\mathcal C^{2,\al}$ norm of $u$ in any relatively compact subset is estimable in terms of $n, \al, \beta, \la, ||u||_{\mathcal C^{1, \beta }(\O)},  ||f||_{\mathcal C^{\al}(\O)}$ and the distance of the set to $\partial\O$.
\end{theorem}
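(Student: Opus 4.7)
The plan is to reduce to smooth solutions by approximation, and then establish an a priori $\mathcal{C}^{2,\alpha}$ estimate through a Caffarelli-type iteration that invokes Theorem~\ref{thm1} as a black box at each scale. For the reduction, I would mollify $f$ to a smooth $f_\varepsilon$ with $f_\varepsilon \geq \lambda/2$ and $\|f_\varepsilon\|_{\mathcal{C}^\alpha} \leq 2\|f\|_{\mathcal{C}^\alpha}$, solve the Dirichlet problem $\det((u^\varepsilon)_{i\bar j}) = f_\varepsilon$ on a strongly pseudoconvex subdomain $\Omega'' \Subset \Omega$ with boundary data $u|_{\partial \Omega''}$ via \cite{CKNS}, and use Kolodziej's stability combined with interpolation to extract $u^\varepsilon \to u$ uniformly and with a uniform $\mathcal{C}^{1,\beta'}$ bound for every $\beta' < \beta$. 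The theorem then reduces to an a priori $\mathcal{C}^{2,\alpha}$ estimate for smooth solutions under a $\mathcal{C}^{1,\beta}$ bound.

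For the a priori estimate, at each $x_0 \in \Omega'$ and each radius $r$ I would compare $u$ to the solution $v = v_{r,x_0}$ of the frozen-coefficient problem $\det(v_{i\bar j}) = f(x_0)$ on $B_r(x_0)$ with $v = u$ on $\partial B_r(x_0)$. Sharp stability for the complex Monge-Amp\`ere equation, exploiting the strict positivity and H\"older continuity of $f$, yields $\|u - v\|_{L^\infty(B_r(x_0))} \leq Cr^{2+\alpha}$. Since $v$ inherits the $\mathcal{C}^{1,\beta}$ boundary values and solves a constant-coefficient equation, a Pogorelov-type interior estimate adapted to the frozen problem should provide a (scale-dependent) $\mathcal{C}^{1,1}$ bound on $v$ that feeds Theorem~\ref{thm1}; this in turn produces a quadratic polynomial $P_{r,x_0}$ --- affine part plus pluriharmonic $(2,0)+(0,2)$ part plus Hermitian $(1,1)$ part whose complex Hessian determinant equals $f(x_0)$ --- with $\|v - P_{r,x_0}\|_{L^\infty(B_\rho(x_0))} \leq C\rho^{2+\alpha}$ for $\rho \leq r/2$. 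Combining the two comparisons and iterating along a geometric sequence of scales yields a Campanato-type decay $\|u - P_{x_0}\|_{L^\infty(B_r(x_0))} \leq Cr^{2+\alpha}$ for a limiting polynomial $P_{x_0}$, hence $u \in \mathcal{C}^{2,\alpha}(\Omega')$.

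The principal obstacle is controlling how the $\mathcal{C}^{1,1}$ bound on $v$ --- the input to Theorem~\ref{thm1} at each step --- scales through the iteration. Under the natural rescaling of the Monge-Amp\`ere equation, the $\mathcal{C}^{1,\beta}$ hypothesis contributes at order $r^{\beta-1}$, the $\mathcal{C}^{1,1}$-dependent output of Theorem~\ref{thm1} degrades roughly as $r^{-n(2+\alpha)(1-\beta)}$, and the frozen-coefficient comparison error is of order $r^{2+\alpha}$; these three scalings balance compatibly precisely when $(1+\beta) + n(2+\alpha)(1-\beta) < 2+\alpha$, which rearranges to $\beta > \beta_0 = 1 - \alpha/[n(2+\alpha)-1]$, fixing the sharp exponent. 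Making this heuristic rigorous --- in particular handling the pluriharmonic $(2,0)+(0,2)$ part of the quadratic approximation, which cannot be controlled via convexity as in the real case treated by Caffarelli --- is where the main technical content of the proof will lie.
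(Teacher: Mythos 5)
You have correctly identified the overall architecture (compare $u$ at each scale to a solution of the frozen problem $\det(v_{i\bar j})=f(x_0)$, then iterate) and your scaling heuristic does recover the right exponent: $(1+\beta)+n(2+\alpha)(1-\beta)<2+\alpha$ rearranges exactly to $\beta>\beta_0$. But the step where you obtain a $\mathcal C^{1,1}$ bound on $v$ by appealing to ``a Pogorelov-type interior estimate'' is a genuine gap, and it is precisely the gap the paper is designed to close. In the complex setting there is no interior $\mathcal C^{1,1}$ estimate for solutions of $\det(v_{i\bar j})=\mathrm{const}$ with merely $\mathcal C^{1,\beta}$ boundary data; the Pogorelov--Blocki examples cited in the introduction show that such a solution can fail to be $\mathcal C^{1,1}$ even with smooth right-hand side, which is why this problem is open for $1-2/n<\gamma<1$. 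In fact, producing an interior $\mathcal C^{1,1}$ bound from a $\mathcal C^{1,\beta}$ bound (for $\beta>\beta_0$) is essentially Theorem~\ref{mthm}, so taking it as an input at each scale makes the argument circular.

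The paper's resolution is the device you are missing: the auxiliary Dirichlet problem uses \emph{mollified} boundary data $\rho_{dt^{\mu}}*u$ on $\partial B_{dt}(x_0)$ with an extra free parameter $\mu>1$. This makes the boundary data genuinely $\mathcal C^{1,1}$ with a controlled blow-up $|\nabla^2(\rho_{dt^\mu}*u)|\le Ct^{\mu(\beta-1)}$, so that the Bedford--Taylor interior $\mathcal C^{1,1}$ estimate (Proposition~\ref{c2}) and the Calabi interior $\mathcal C^{3}$ estimate (Proposition~\ref{c3}) actually apply to the auxiliary function. The parameter $\mu$ is then chosen to balance the error from mollifying the boundary ($t^{\mu(1+\beta)}$), the freezing error ($t^{2+\alpha}$), and the Bedford--Taylor/Calabi blow-up ($t^{(n+1)\mu(\beta-1)}$); this is where the strange-looking $\phi(\delta)$ in the Key Lemma comes from. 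A related structural difference: the paper does not attempt to obtain a second-order Campanato decay directly, which would require controlling the pluriharmonic $(2,0)+(0,2)$ part (you flag this yourself as the hard part). Instead it proves only a $\mathcal C^{1,\delta}$ gain at each stage (Lemma~\ref{klm}), iterates $\delta\uparrow 1$ to reach $\mathcal C^{1,1}$ (Theorem~\ref{mthm}), and then invokes Theorem~\ref{thm1} \emph{once} at the end to go from $\mathcal C^{1,1}$ to $\mathcal C^{2,\alpha}$. This entirely sidesteps the pluriharmonic-part difficulty rather than confronting it. So the plan needs the mollification mechanism and the downgraded target ($\mathcal C^{1,1}$ rather than direct $\mathcal C^{2,\alpha}$ decay) to close.
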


\medskip

We now give an overview of our proof. In \cite{DZhZh}, the authors proved Theorem \ref{thm1} based on a method due to Wang (\cite{Wa1}) and the Bedford-Taylor interior $\mathcal C^{1,1}$ estimate (\cite{BT}) which is the sole reason why they need the assumption that $u\in\mathcal C^{1,1}$. In this paper, for any point $x_{0}\in \O$, we consider the following Dirichlet problem:
\begin{equation}\label{vp1}
\left\{
\begin{split}
& \det((\varphi_{t, x_{0}})_{i\bar{j}})=f(x_0),  \quad &\textrm{in } \quad B_{dt}(x_0),\\
& \varphi_{t, x_{0}} =\rho_{dt^{\mu}}\!*\! u ,   \quad &\textrm{on } \quad \partial B_{dt}(x_0),
\end{split}
\right.
\end{equation}
where $\rho $ is a nonnegative symmetric mollifier, constants $d \leq \textrm{dist}(x_0,\partial\Omega)$, $0<t\leq\frac{1}{2}$ and $\mu >1$.
The results in \cite{CKNS} guarantee  the existence of smooth solution $\varphi_{t, x_{0}}$ to the above Dirichlet problem. By the Bedford-Taylor interior $\mathcal C^{1,1}$ estimate, we analysis the asymptotic behavior of $\varphi_{t, x_{0}}$ as $t\rightarrow 0$. Furthermore, we can obtain a $\mathcal{C}^{1, 1}$-estimate for the weak solution $u$ of (\ref{CMA}) under the assumption that  $u \in \mathcal C^{1,\beta}$ with $\beta \in (\beta_0,1)$, where the constant $\beta_{0}$ is given in (\ref{b0}), see Theorem \ref{mthm} for details. Then Theorem \ref{thm1} implies Theorem \ref{thm2}.
We hope that our approach can be helpful to solve the  optimal $\mathcal C^{2,\alpha}$-estimate of the  complex Monge-Amp\`ere equation.

\hspace{0.3cm}

\section{Preliminary}
\setcounter{equation}{0}

For convenience, we first introduce some norms and semi-norms on certain function spaces, which can be found in \cite{GT}.\\

Let $\Omega \subset \mathbb{R}^n$ be a bounded open domain. For any $x,y\in\Omega$, we set
\begin{equation*}d_x=\dist(x,\partial\Omega), \qquad d_{x,y}=\min\{d_x,d_y\}.\end{equation*}
For any $u \in  \mathcal{C}^{k,\alpha}(\Omega)$ ($\alpha\in [0,1] $), we define the following quantities:
\begin{equation}\begin{aligned}
&|u|_{0;\Omega}=\sup\limits_{x\in \Omega}|u(x)|,\\
&[u]_{k,0;\Omega}=[u]_{k;\Omega}=\sup\limits_{x\in \Omega}|\nabla^k u(x)|,\\
&[u]_{k,\alpha;x}=\sup\limits_{\begin{subarray}{c} y\in \Omega\\ y\neq x\end{subarray}}\frac{|\nabla^k u(y)-\nabla^k u(x)|}{|y-x|^{\alpha}},\\
&[u]_{k,\alpha;\Omega}=\sup\limits_{x\in\Omega}[u]_{k,\alpha;x}=\sup\limits_{\begin{subarray}{c}x,y\in \Omega\\x\neq y\end{subarray}}\frac{|\nabla^k u(x)-\nabla^k u(y)|}{|x-y|^{\alpha}},\\
\end{aligned}\end{equation}
and
\begin{equation}\begin{aligned}
&[u]^*_{k,0;\Omega}=[u]^*_{k;\Omega}=\sup\limits_{x\in\Omega}d_x^k|\nabla^k\! u(x)|,\\
&|u|^*_{k,0;\Omega}=|u|^*_{k;\Omega}=\sum\limits_{i=0}^{k}[u]^*_{i;\Omega},\\
&[u]^*_{k,\alpha;\Omega}=\sup\limits_{\begin{subarray}{c}x,y\in\Omega\\x\neq y\end{subarray}}d_{x,y}^{k+\alpha} \frac{|\nabla^k\! u(x)-\nabla^k\! u(y)|}{|x-y|^{\alpha}},\\
&|u|^*_{k,\alpha;\Omega}=|u|^*_{k;\Omega}+[u]^*_{k,\alpha;\Omega}.
\end{aligned}\end{equation}
By the definitions, we see that
\begin{equation}|u|_{0;\Omega}=[u]_{0;\Omega}=|u|^*_{0;\Omega},\end{equation}
and
\begin{equation}[u]^*_{k-1,1;\Omega}\leq C(n,k)([u]^*_{k-1;\Omega}+[u]^*_{k;\Omega}),\qquad [u]^*_{k;\Omega}\leq C(n,k)[u]^*_{k-1,1;\Omega},\end{equation}
for $k\geq 1$.
For any $f\in \mathcal{C}^{\alpha}(\Omega)$, we define
\begin{equation}|f|^{(k)}_{0,\alpha;\Omega}=\sup\limits_{x\in\Omega}d_x^k|f(x)|+\sup\limits_{\begin{subarray}{c}x,y\in\Omega\\x\neq y\end{subarray}}d_{x,y}^{k+\alpha} \frac{|f(x)-f(y)|}{|x-y|^{\alpha}}.\end{equation}

\medskip

In the following,  we list some  results we need including: the $\mathcal{C}^{2,\alpha}$ estimate for the Laplacian equation, the interpolation inequalities for H\"older continuous functions, the Bedford-Taylor interior $\mathcal{C}^{1,1}(\O)$ estimate and the Calabi interior $\mathcal{C}^{3}$ estimate for solutions of the complex Monge-Amp\`ere equation.

\medskip

\begin{proposition}[Theorem 4.8 of \cite{GT}]\label{sch}
Let $u\in \mathcal{C}^{2}(\Omega)$ and $f\in\mathcal{C}^\alpha(\Omega)(\alpha\in (0,1))$ satisfy the Laplacian equation on $\Omega$
\begin{equation*}\Delta u=f,\end{equation*}
then
\begin{equation}[u]^*_{2,\alpha;\Omega}\leq C(n,\alpha)(|u|_{0;\Omega}+|f|^{(2)}_{0,\alpha;\Omega}).\end{equation}
\end{proposition}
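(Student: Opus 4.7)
The plan is to deduce Proposition \ref{sch} from the classical unweighted interior Schauder estimate for Poisson's equation, by scaling and reassembly; this is exactly the strategy of \cite[Sec.~4.4]{GT}.

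First I would establish the following local bound: if $\Delta u=f$ on $B_1\subset\mathbb{R}^n$, then
\begin{equation*}
[u]_{2;B_{1/2}}+[u]_{2,\al;B_{1/2}}\leq C(n,\al)\big(|u|_{0;B_1}+|f|_{0;B_1}+[f]_{0,\al;B_1}\big).
\end{equation*}
My preferred route is Campanato's harmonic-replacement method. For any ball $B_r(x)\subset B_{1/2}$, write $u=v+w$ on $B_r(x)$ where $v$ solves $\Delta v=f(x)$ with $v=u$ on $\partial B_r(x)$; then $\Delta w=f-f(x)$ with $w=0$ on the boundary. The maximum principle, applied to $w$ against the barrier $\frac{r^\al [f]_{0,\al}}{2n}(r^2-|y-x|^2)$, gives $|w|_{0;B_r(x)}\le C r^{2+\al}[f]_{0,\al;B_1}$, while $v$ differs from a harmonic function by the explicit quadratic $\frac{f(x)}{2n}|y-x|^2$ and so enjoys the interior bound $|D^3 v|\le Cr^{-3}|v|_{0;B_r(x)}$. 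Comparing $D^2 u$ on successively halved balls with the second-order Taylor polynomial of $v$ at $x$ then yields geometric $r^\al$-decay of the mean oscillation of $D^2u$; by Campanato's characterization this gives $D^2u\in\mathcal{C}^{0,\al}$ with the quoted bound.

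Next, by applying this local estimate to $\tilde u(y)=R^{-2}u(x_0+Ry)$, which satisfies $\Delta\tilde u(y)=f(x_0+Ry)$ on $B_1$, and reversing the dilation, I obtain the scaled inequality
\begin{equation*}
R^2[u]_{2;B_{R/2}(x_0)}+R^{2+\al}[u]_{2,\al;B_{R/2}(x_0)}\leq C\big(|u|_{0;B_R(x_0)}+R^2|f|_{0;B_R(x_0)}+R^{2+\al}[f]_{0,\al;B_R(x_0)}\big),
\end{equation*}
valid on any ball $B_R(x_0)\subset\Omega$. Finally I would reassemble into the weighted semi-norms. For each $x\in\Omega$, apply the scaled estimate with $R=d_x/2$: the left-hand side controls $d_x^2|D^2u(x)|$ and the weighted H\"older quotient at pairs $(x,y)$ with $|y-x|\le d_x/4$, uniformly in $x$. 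For pairs $(x,y)\in\Omega\times\Omega$ with $|x-y|>\frac14 d_{x,y}$, the trivial splitting $|D^2u(x)-D^2u(y)|\le|D^2u(x)|+|D^2u(y)|$ combined with the pointwise control of $d_x^2|D^2u|$ already obtained bounds the weighted H\"older quotient by a constant times $[u]^*_{2;\Omega}$. The right-hand sides of all the scaled inequalities collapse into $|u|_{0;\Omega}+|f|^{(2)}_{0,\al;\Omega}$, which gives the claim.

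The main obstacle is the local estimate of the first step; once the Campanato/harmonic-replacement machinery is set up, the scaling and gluing are pure bookkeeping. Ellipticity enters only through the maximum-principle bound on $w$ and the interior derivative bound $|D^3 h|\le Cr^{-3}|h|_{0}$ for harmonic $h$; both are classical and both are sharp at the order $r^{2+\al}$ that drives the Campanato decay.
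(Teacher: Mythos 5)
The paper states this proposition as Theorem 4.8 of Gilbarg--Trudinger and gives no proof, so the comparison is between your reconstruction and theirs. Your step 1 (the local, unweighted estimate on $B_1$) differs from \cite{GT}: they obtain the $\mathcal{C}^{2,\alpha}$ bound by an explicit potential-theoretic computation --- Lemma 4.4 of \cite{GT} estimates the H\"older seminorm of second derivatives of the Newtonian potential of a $\mathcal{C}^\alpha$ function, and Theorem 4.6 combines this with interior estimates for harmonic functions --- whereas you use harmonic replacement and Campanato iteration. Both are classical and correct; your route avoids kernel computations and is closer to how Schauder theory is usually presented now, while GT's is self-contained within their potential-theory chapter (so the phrase ``exactly the strategy of GT'' overstates the similarity for this step). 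Your steps 2 and 3 (scale the local estimate to an arbitrary ball by $\tilde u(y)=R^{-2}u(x_0+Ry)$, then assemble the weighted seminorms by applying it at each $x$ with $R=d_x/2$ and splitting the H\"older quotient into near pairs, handled by the local estimate, and far pairs, handled by the pointwise bound on $d_x^2|D^2u(x)|$) are precisely GT's passage from Theorem 4.6 to Theorem 4.8. One small correction: the bound $|D^3 v|\le Cr^{-3}|v|_{0;B_r(x)}$ should carry an additional $|f(x)|\,r^{-1}$ term, since it is $v-\frac{f(x)}{2n}|y-x|^2$ that is harmonic, not $v$ itself; this is harmless, as the extra contribution is absorbed into the $|f|_{0;B_1}$ term you already keep on the right. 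With that caveat the argument is sound.
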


\medskip

\begin{proposition}[Lemma 6.32 of \cite{GT}]\label{inter}
Suppose $j+\beta<k+\alpha$, where $j,k=0,1,2,\cdots$ and $\alpha,\beta\in[0,1]$. Let $u\in \mathcal{C}^{k,\alpha}(\Omega)$, then for any $\mu\in(0, 1]$
\begin{equation}[u]^*_{j,\beta;\Omega}\leq C(n,k)(\mu^{-(j+\beta)}|u|_{0;\Omega}+\mu^{k+\alpha-(j+\beta)}[u]^*_{k,\alpha;\Omega}).\end{equation}
\end{proposition}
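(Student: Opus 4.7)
The plan is to reduce the weighted interpolation inequality to the standard unweighted interpolation on balls, with radius chosen proportional to $\mu d_x$, where $d_x=\dist(x,\p\Omega)$. Since the statement is verbatim Lemma 6.32 of \cite{GT}, one may simply cite it; I sketch the argument below for completeness.

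First, I would record the unweighted interpolation on balls: for $v\in\mathcal C^{k,\alpha}(\overline{B_r(x_0)})$ with $j+\beta<k+\alpha$,
\[
[v]_{j,\beta;B_{r/2}(x_0)}\le C(n,k)\bigl(r^{-(j+\beta)}|v|_{0;B_r(x_0)}+r^{k+\alpha-(j+\beta)}[v]_{k,\alpha;B_r(x_0)}\bigr).
\]
This is standard, proved by induction on $k-j$ using Taylor's formula with integral remainder around $x_0$ and the rescaling $v(y)\mapsto v(x_0+ry)$ that normalizes $r$.

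Next, for each $x\in\Omega$ set $r=\tfrac{1}{4}\mu d_x$. Then $B_{2r}(x)\subset\Omega$ and every $y,z\in B_{2r}(x)$ satisfies $d_{y,z}\ge d_x/2$, so
\[
[u]_{k,\alpha;B_{2r}(x)}\le 2^{k+\alpha}d_x^{-(k+\alpha)}[u]^*_{k,\alpha;\Omega}.
\]
Applying the ball inequality with outer radius $2r$, evaluating the resulting seminorm at the center $x$, and using $r/d_x=\mu/4$, one obtains the pointwise bound
\[
d_x^{j}|\nabla^j u(x)|\le C\bigl(\mu^{-j}|u|_{0;\Omega}+\mu^{k+\alpha-j}[u]^*_{k,\alpha;\Omega}\bigr),
\]
which controls $[u]^*_{j;\Omega}$ after taking the supremum over $x$.

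Finally, for $[u]^*_{j,\beta;\Omega}$, I would split pairs $(x,y)\in\Omega^2$ by the ratio $|x-y|/d_{x,y}$. When $|x-y|\ge\tfrac{1}{8}\mu d_{x,y}$, the pointwise bound above on $|\nabla^j u(x)|$ and $|\nabla^j u(y)|$, combined with $|x-y|^{-\beta}\le(\tfrac{1}{8}\mu d_{x,y})^{-\beta}$, gives the estimate directly. When $|x-y|<\tfrac{1}{8}\mu d_{x,y}$, both points lie in $B_{r/2}(x)$ with $r=\tfrac{1}{4}\mu d_x$, and the $\beta$-Hölder part of the ball inequality, scaled by $d_{x,y}^{j+\beta}$, produces the estimate with the correct power of $\mu$. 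The argument is conceptually immediate; the only obstacle is the algebraic bookkeeping needed to ensure $r/d_x=\mu/4$ enters each exponent with the right sign.
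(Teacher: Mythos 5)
Your argument is correct and is essentially the standard proof of Lemma 6.32 in Gilbarg--Trudinger: unweighted interpolation on balls of radius comparable to $\mu d_x$, followed by the case split on $|x-y|$ versus $\mu d_{x,y}$. The paper itself supplies no proof for this proposition beyond the citation, so there is nothing further to compare.
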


\medskip

\begin{proposition}[Theorem 6.7 of \cite{BT}]\label{c2}
Let $B=B_r(x_0)\subset\mathbb{C}^n$, and  $u \in PSH(B)\cap\mathcal{C}(\bar{B})$ be a weak solution of the complex Monge-Amp\`ere equation
\begin{equation*}\left\{\begin{array}{ll}\det(u_{i\bj})=f,& \textrm{in }B,\\
u=\varphi,&\textrm{on }\partial B,
\end{array}
\right.
\end{equation*}
where $f\geq 0$, $f^\frac{1}{n}\in \mathcal{C}^{1,1}(\bar{B})$ and $\varphi\in\mathcal{C}^{1,1}(\partial B)$.
Then $u\in \mathcal{C}^{1,1}(B)$ and for any $t\in(0,1)$
\begin{equation}[u]_{1,1;B_{(1-t)r}(x_0)}\leq C(n,t)([\varphi]_{1,1;\partial B}+|f^{\frac{1}{n}}|_{0;B}+r[f^{\frac{1}{n}}]_{1;B}+r^2[f^{\frac{1}{n}}]_{1,1;B}).\end{equation}
\end{proposition}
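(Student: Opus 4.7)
The plan is to follow Bedford--Taylor's approach, which controls the $C^{1,1}$ norm of $u$ through a one-sided bound on pure second differences established via the concavity of $M\mapsto(\det M)^{1/n}$ on positive semi-definite Hermitian matrices, combined with the comparison principle of \cite{BT} on shifted subdomains.

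\emph{Reduction.} First I would rescale $z\mapsto x_0+rz$ to reduce to the unit ball $x_0=0$, $r=1$; the transformed equation has right-hand side $r^{2n}f(x_0+rz)$, and one checks that $[\varphi]_{1,1;\partial B_1}$, $|f^{1/n}|_0$, $r[f^{1/n}]_{1}$ and $r^2[f^{1/n}]_{1,1}$ each rescale by a common factor of $r^2$, matching the $r^2$ produced by $[u]_{1,1;B_{(1-t)r}(x_0)}$. Next I would approximate by the Caffarelli--Kohn--Nirenberg--Spruck theory: choose smooth strictly positive $f_\varepsilon\to f$ with $f_\varepsilon^{1/n}\to f^{1/n}$ in $C^{1,1}(\bar B_1)$ and smooth $\varphi_\varepsilon\to\varphi$ in $C^{1,1}(\partial B_1)$; the corresponding Dirichlet problems have smooth strictly plurisubharmonic solutions $u_\varepsilon$ converging uniformly to $u$ by the comparison principle. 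It therefore suffices to prove the bound for smooth strictly plurisubharmonic solutions with constant depending only on the listed norms.

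\emph{Second-difference bound.} For a real unit vector $\xi\in\mathbb{R}^{2n}\cong\mathbb{C}^n$ and small $h>0$, set
\begin{equation*}
v_h^\xi(z):=\tfrac12\bigl(u(z+h\xi)+u(z-h\xi)\bigr)\quad\text{on }\Omega_h:=B_1\cap(B_1-h\xi)\cap(B_1+h\xi).
\end{equation*}
Concavity of $\det^{1/n}$ applied to $(u_{i\bar j})(z\pm h\xi)$, together with the $C^{1,1}$ control on $f^{1/n}$, yields
\begin{equation*}
\bigl[\det(v_h^\xi)_{i\bar j}(z)\bigr]^{1/n}\ge f(z)^{1/n}-\tfrac12[f^{1/n}]_{1,1;B_1}h^2\quad\text{on }\Omega_h.
\end{equation*}
To pass from this to $v_h^\xi-u\le C(n,t)Kh^2$ on $\Omega_h\supset B_{1-t}$, where $K$ denotes the sum in parentheses in the claimed estimate, I would let $w_h$ solve $\det(w_{h,i\bar j})=f$ on $\Omega_h$ with $w_h=v_h^\xi$ on $\partial\Omega_h$; Bedford--Taylor's comparison principle applied to $(v_h^\xi,w_h)$ forces $v_h^\xi\le w_h+C(n)Kh^2$ after absorbing the $O(h^2)$ discrepancy in the right-hand sides by a quadratic shift, and a second application to $(w_h,u)$ (both solve $\det=f$ in $\Omega_h$) reduces the bound to $\sup_{\partial\Omega_h}(v_h^\xi-u)\le C(n,t)Kh^2$, which I would extract from a CKNS-style barrier argument built from $|z|^2-1$ and a small multiple of $\log|z|$. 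Taking the supremum over $\xi$ gives $D_\xi^2 u\le C(n,t)K$ on $B_{1-t}$. Plurisubharmonicity $u_{i\bar i}\ge 0$, together with the identities $u_{x_jx_j}=2u_{j\bar j}+2\Re u_{jj}$ and $u_{y_jy_j}=2u_{j\bar j}-2\Re u_{jj}$, converts the upper bound on $u_{y_jy_j}$ into a matching lower bound on $u_{x_jx_j}$ (and symmetrically); polarization then produces two-sided bounds on all mixed real second derivatives, giving $[u]_{1,1;B_{1-t}}\le C(n,t)K$, and undoing the rescaling yields the claim.

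The main obstacle is the boundary analysis on $\partial\Omega_h$: its "shifted" pieces $\{|z\pm h\xi|=1\}\cap\overline{\Omega_h}$ combine a boundary value of $u$ (equal to $\varphi$) with a nearby interior value of $u$ whose $O(h^2)$-control must be obtained before the interior $C^{1,1}$ estimate is available, and the small region where $\partial B_1$ meets $\{|z\pm h\xi|=1\}$ introduces corner-like geometry. Achieving the required quantitative near-boundary behavior of $u$ is precisely what forces the four ingredients $[\varphi]_{1,1;\partial B_1}$, $|f^{1/n}|_0$, $r[f^{1/n}]_1$, $r^2[f^{1/n}]_{1,1}$ into the final constant: the upper barrier uses the $C^{1,1}$ extension of $\varphi$ plus a solution of $\det=\bigl(|f^{1/n}|_0+r[f^{1/n}]_1+r^2[f^{1/n}]_{1,1}\bigr)^n$ on a thin annular shell, while the lower barrier is obtained from $|z|^2-1$ weighted by $[\varphi]_{1,1;\partial B_1}$ and the defect of $u$ from $\varphi$ measured in $C^{0,1}$.
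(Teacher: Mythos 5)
This proposition is not proved in the paper at all; it is quoted as Theorem 6.7 of Bedford--Taylor, so the relevant comparison is with the proof in that reference. Your skeleton (rescaling, approximation, concavity of $\det^{1/n}$ applied to a symmetric second difference, comparison principle, then plurisubharmonicity plus polarization to pass from one-sided pure second-difference bounds to the full $\mathcal C^{1,1}$ bound) is exactly the right skeleton, and the rescaling bookkeeping and the final psh-polarization step are correct. The gap is the one you yourself flag as ``the main obstacle,'' and it is fatal as written: with plain translations $z\mapsto z\pm h\xi$ the comparison must be run on $\Omega_h=B\cap(B-h\xi)\cap(B+h\xi)$, and on the shifted pieces of $\partial\Omega_h$ you must show $\tfrac12\bigl(u(z+h\xi)+u(z-h\xi)\bigr)-u(z)\le CKh^2$ where one of the three points lies on $\partial B$ and the other two are interior points at distance $O(h)$ from $\partial B$. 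That is a genuine second-order (normal--normal) boundary estimate for $u$; the barriers you propose ($|z|^2-1$, a multiple of $\log|z|$, a Lipschitz defect of $u$ from $\varphi$) only give first-order control $|u(z)-\varphi(z_0)|\le C\,\mathrm{dist}(z,\partial B)$, hence only $O(h)$ on the shifted boundary, which is an order short. Restricting the comparison to an interior ball does not help either, since then the boundary term is itself an interior second difference of $u$ and the argument becomes circular.

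The missing idea in Bedford--Taylor is to replace translations by the holomorphic automorphisms $T_a$ of the ball. One sets $v(z)=\tfrac12\bigl(u(T_az)+u(T_{-a}z)\bigr)+C|a|^2(|z|^2-r^2)$; concavity of $\det^{1/n}$ together with the $\mathcal C^{1,1}$ control of $f^{1/n}$ and the explicit Jacobians of $T_{\pm a}$ gives $\det(v_{i\bar j})\ge f$ for suitable $C$ controlled by $|f^{1/n}|_{0}+r[f^{1/n}]_{1}+r^2[f^{1/n}]_{1,1}$, and --- crucially --- $T_{\pm a}$ preserve $\partial B$, so the comparison principle is applied on all of $B$ and the boundary inequality reduces to $\tfrac12\bigl(\varphi(T_az)+\varphi(T_{-a}z)\bigr)-\varphi(z)\le C[\varphi]_{1,1;\partial B}|a|^2$, which follows from the elementary geometric fact that $\tfrac12(T_az+T_{-a}z)=z+O(|a|^2)$ uniformly on $\partial B$. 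Since $T_{\pm a}z=z\pm(\text{linear in }a)+O(|a|^2)$ at interior points, the resulting inequality $v\le u+C'|a|^2$ delivers the one-sided second-difference bound you were after, with the loss $C(n,t)$ coming only from the degeneration of the linear map $a\mapsto dT_a(z)$ as $z\to\partial B$. If you replace your translation step by this automorphism step, the rest of your argument goes through; without it, the boundary estimate on $\partial\Omega_h$ is an unproved (and genuinely difficult) claim.
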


\medskip

\begin{proposition}[Theorem 1 of \cite{SR}]\label{c3}
Let $\Omega\subset\mathbb{C}^n$ be a bounded domain, and let $u \in PSH(\Omega)\cap\mathcal{C}^5({\Omega})$ be a solution of complex Monge-Amp\`ere equation
\begin{equation*}\det(u_{i\bj})=f,\end{equation*}
in $\O$, where $0<m\leq f\in\mathcal{C}^3(\Omega)$ for some constant $m$.
If $\Delta u\leq K$ on $\Omega$ and $[\log f]^*_{2;\Omega}+[\log f]^*_{3;\Omega}\leq L$, then
\begin{equation}\label{c3ineq}[u_{i\bar{j}}]^*_{1;\Omega}\leq C(n)K^{n+1}m^{-1}(1+L^{\frac{1}{2}}).\end{equation}
\end{proposition}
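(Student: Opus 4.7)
The plan is to prove this via the classical Calabi third-order interior estimate technique. Introduce the K\"ahler metric $g_{i\bar j} := u_{i\bar j}$ coming from the Monge-Amp\`ere solution; since $f > 0$ this is positive definite. The hypotheses $\tr g = \Delta u \leq K$ and $\det g = f \geq m$ pin the eigenvalues of $g$ into the range $[m K^{1-n}, K]$ by the AM--GM inequality, so the operator norm of $g^{-1}$ is at most $K^{n-1}/m$. This pinching will be the source of the $K^{n+1}$ and $m^{-1}$ factors in the final bound.

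The quantity I would analyze is Calabi's third-order invariant
\[
S := g^{i\bar l} g^{k\bar j} g^{p\bar q} u_{i\bar j p} u_{k\bar l \bar q},
\]
which is the squared $g$-norm of $\nabla^{1,0} g$. Note that pointwise the Euclidean quantity $|\nabla u_{i\bar j}|^2$ is controlled by $S$ times a power of $|g|$, hence a power of $K$. Differentiating $\log\det(u_{i\bar j}) = \log f$ and commuting covariant derivatives yields Calabi's identity, which after discarding a nonnegative fourth-order piece becomes the differential inequality
\[
\Delta_g S \;\geq\; c(n)\, |\nabla^2 g|^2_g \;-\; C(n) \bigl(\, |\Ric(g)|_g\, S \;+\; |\nabla \Ric(g)|_g\, S^{1/2}\,\bigr).
\]
Since $\Ric(g)_{i\bar j} = -(\log f)_{i\bar j}$ and $\nabla\Ric(g)$ involves third derivatives of $\log f$, both curvature terms are controlled by $[\log f]^*_{2;\Omega} + [\log f]^*_{3;\Omega} \leq L$ together with the pinching of $g^{-1}$.

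The interior weighted estimate is then extracted by a maximum-principle argument with a standard cut-off. Fix $x_0 \in \Omega$ and set $r = d_{x_0}/2$; choose $\eta \in \mathcal{C}^\infty_c(B_r(x_0))$ with $\eta \equiv 1$ on $B_{r/2}(x_0)$ and $|\nabla^k \eta| \leq C r^{-k}$. At an interior maximum of $\eta^2 S$, the critical-point identity $\nabla_g(\eta^2 S) = 0$ combined with $\Delta_g(\eta^2 S) \leq 0$ and the Calabi inequality above produces a pointwise bound in which the troublesome $|\nabla \Ric|_g S^{1/2}$ term is absorbed either by the good fourth-order term $|\nabla^2 g|^2_g$ via Young's inequality, or split against $S$ itself so as to keep the $L$-dependence linear. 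The outcome is an inequality of the form $\eta^2 S \leq C(n)\, K^{2n-1} m^{-2}(1+L) r^{-2}$; taking square roots, multiplying by the $K^{3/2}$ needed to pass from $S^{1/2}$ back to the Euclidean quantity $|\nabla u_{i\bar j}|$, and rearranging the factor of $r = d_{x_0}/2$ then yields the claimed bound $d_{x_0} |\nabla u_{i\bar j}(x_0)| \leq C(n) K^{n+1} m^{-1}(1+L^{1/2})$.

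The main obstacle is the explicit bookkeeping of powers: all of $K^{n+1}$, $m^{-1}$, and especially the $L^{1/2}$ (rather than $L$) dependence require careful tracking of the metric-inversion cost through each step of Calabi's identity, the cut-off Laplacian computation, and the Young-inequality absorption. A secondary technical point is verifying that $\mathcal{C}^5$-regularity of $u$ is enough to justify differentiating the Monge-Amp\`ere equation twice and evaluating covariant derivatives of $\Ric(g)$ pointwise --- which amounts to checking that all tensors entering Calabi's identity lie in at least $\mathcal{C}^0(\Omega)$ under the given regularity.
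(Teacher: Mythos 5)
Your overall strategy --- Calabi's third-order quantity $S$, the differential inequality obtained by differentiating the equation twice, eigenvalue pinching from $\Delta u\le K$ and $\det(u_{i\bar j})\ge m$, and a cut-off/maximum-principle localization followed by $\sum|u_{i\bar jk}|^2\le\Lambda^3S$ --- is exactly the route the paper takes in its appendix (Theorem 6.1, following Riebesehl--Schulz and Sherman--Weinkove). But there is a genuine gap in your maximum-principle step: you maximize $\eta^2S$ \emph{alone}. At such a maximum the inequality $0\ge\Delta_g(\eta^2S)$ reads
\[
0\ \ge\ \eta^2\Delta_g S+S\,\Delta_g\eta^2+2\langle d\eta^2,dS\rangle_g ,
\]
and after inserting the Calabi inequality and absorbing the cross term into the fourth-order piece, every surviving first-power-of-$S$ term on the right carries a \emph{negative} sign: $-C|{\Ric}|_g\,\eta^2S$, the cut-off term $S\,\Delta_g\eta^2$ (which can be as negative as $-C\lambda^{-1}r^{-2}S$), and the additional $-\epsilon S$ you create if you split $|\nabla\Ric|_g S^{1/2}$ by Young's inequality. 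The good fourth-order term $|\nabla^2g|^2_g$ does not control $S$ from below, so no positive multiple of $S$ remains to set against these, and the inequality degenerates into a triviality rather than a bound on $S$.

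The missing idea is the auxiliary second-order term in the test function. The paper takes $G=\eta S+A\lambda^{-1}\Delta u$ with $A\sim(1+M)\lambda^{-1}r_0^2$ and uses the companion identity $\Delta_g\Delta u\ge\lambda S-nMr^{-2}$ to manufacture a good term $+AS$ that dominates all the negative multiples of $S$; only then does the maximum point yield a quadratic inequality $aS-bS^{1/2}-c\le0$ with $a\sim(1+M)$ and $b\sim N$, whence $S\le C\bigl(b^2a^{-2}+ca^{-1}\bigr)$ --- this is precisely where the $(1+M)^{-2}N^2$ term, and hence after the square root the $1+L^{1/2}$ dependence, comes from. Your bookkeeping of the remaining powers ($K^{n+1}m^{-1}$ from the pinching, $K^{3/2}$ to convert $S^{1/2}$ into $|\nabla u_{i\bar j}|$) is consistent with the paper's, but the argument does not close without this extra term in the barrier.
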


\medskip

The above  proposition was essentially proved in \cite{SR}. But  no exact formula like (\ref{c3ineq}) is given in the origin theorem of \cite{SR}, so we will give a proof   in the appendix for  readers' convenience.

\hspace{0.3cm}

\section{Main theorem and the key lemma}
\setcounter{equation}{0}

By Theorem \ref{thm1}, we know that to prove Theorem \ref{thm2} is reduced to obtain $\mathcal{C}^{1, 1}$-estimates, i.e. to show the following theorem:

\medskip

\begin{theorem}[Main Theorem]\label{mthm}
Let  $u\in\psh\cap\ \mathcal C(\O)$ be a weak solution of the complex Monge-Amp\`ere
equation (\ref{CMA}) in $\O$ and satisfy the same assumptions as that in Theorem \ref{thm2}.
Then $u\in\mathcal C^{1,1}(\O)$. Furthermore the $\mathcal C^{1,1}$ norm of $u$ in any relatively compact subset is estimable in terms of $n, \al, \beta, \la, ||u||_{\mathcal C^{1, \beta }(\O)},  ||f||_{\mathcal C^{\al}(\O)}$ and the distance of the set to $\partial\O$.
\end{theorem}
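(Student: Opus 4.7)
The strategy, following the outline in the introduction, is to approximate $u$ at each point $x_0\in\O$ by the smooth solution $\varphi_{t,x_0}$ of the auxiliary Dirichlet problem (\ref{vp1}) on the shrinking ball $B_{dt}(x_0)$, and then to extract a uniform $C^{1,1}$ bound on $u$ by matching it against the second-order Taylor polynomial of $\varphi_{t,x_0}$ at the center. The essential feature that makes the approximation tractable is that the right-hand side of (\ref{vp1}) is the constant $f(x_0)$, so the $f^{1/n}$ and $\log f$ contributions in Propositions~\ref{c2} and~\ref{c3} become trivial, and the regularity of $\varphi_{t,x_0}$ is driven entirely by the boundary regularity of the mollified data $\rho_{dt^\mu}\ast u$.

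First I would derive sharp a priori bounds for $\varphi_{t,x_0}$. Differentiating the mollifier twice against a $C^{1,\beta}$ function yields $[\rho_{dt^\mu}\ast u]_{1,1;\partial B_{dt}}\le C\|u\|_{C^{1,\beta}}(dt^\mu)^{\beta-1}$, so Proposition~\ref{c2} gives the Bedford--Taylor bound
\[
K(t):=[\varphi_{t,x_0}]_{1,1;B_{dt/2}(x_0)}\le C\bigl((dt^\mu)^{\beta-1}+1\bigr),
\]
and Proposition~\ref{c3}, applied with the trivial Hessian bound on $\log f(x_0)$, then produces $|\nabla^3\varphi_{t,x_0}(x_0)|\le C(dt)^{-1}K(t)^{n+1}$ at the center. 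Next I would establish a stability estimate for $u-\varphi_{t,x_0}$. Barriers of the form $\varphi_{t,x_0}\pm c_1\pm c_2(|x-x_0|^2-(dt)^2)$, in which $c_1\sim(dt^\mu)^{1+\beta}$ absorbs the boundary mollification error $\|u-\rho_{dt^\mu}\ast u\|_{L^\infty(\partial B_{dt})}\le C(dt^\mu)^{1+\beta}$ and $c_2\sim(dt)^{\alpha}$ absorbs the right-hand-side error $|f-f(x_0)|\le C(dt)^{\alpha}$, combined with the comparison principle for complex Monge--Amp\`ere, yield
\[
\|u-\varphi_{t,x_0}\|_{L^\infty(B_{dt}(x_0))}\le C\bigl[(dt^\mu)^{1+\beta}+(dt)^{2+\alpha}\bigr].
\]

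Letting $P_{t,x_0}$ denote the second-order Taylor polynomial of $\varphi_{t,x_0}$ at $x_0$, Taylor's theorem combined with the third-derivative bound gives $\|\varphi_{t,x_0}-P_{t,x_0}\|_{L^\infty(B_r(x_0))}\le Cr^3(dt)^{-1}K(t)^{n+1}$ for $r\le dt/4$, and consequently
\[
\|u-P_{t,x_0}\|_{L^\infty(B_r(x_0))}\le C\bigl[(dt^\mu)^{1+\beta}+(dt)^{2+\alpha}+r^3(dt)^{-1}(dt^\mu)^{(\beta-1)(n+1)}\bigr].
\]
Setting $r=(dt)^s$ and optimizing over $s$ and the free parameter $\mu>1$ so that every term is of order $r^2$ produces the compatibility condition that reduces exactly to $\beta>\beta_0=1-\alpha/(n(2+\alpha)-1)$. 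Iterating across a geometric sequence of scales $r_k\to 0$, and exploiting that the difference of $\varphi_{t_k,x_0}$ and $\varphi_{t_{k+1},x_0}$ satisfies a linear elliptic equation with the common constant right-hand side $f(x_0)$, one shows that the Taylor polynomials $P_{t_k,x_0}$ form a Cauchy sequence whose limit $P_{x_0}$ obeys the Campanato-type bound $\|u-P_{x_0}\|_{L^\infty(B_r(x_0))}\le Cr^2$ uniformly in $r$. This delivers $u\in C^{1,1}$ on relatively compact subdomains with the quantitative estimate of the theorem.

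I expect the main obstacle to be the delicate balancing of the three competing error exponents in the last display that is required to extract the sharp threshold $\beta_0$, together with the iteration/convergence argument ensuring that the Hessians of the approximating polynomials $P_{t_k,x_0}$ remain uniformly bounded as $t_k\to 0$ despite the blow-up of $K(t)$. Controlling the ellipticity of the linearized Monge--Amp\`ere equation satisfied by the successive differences, at the same scales where the Bedford--Taylor bound is deteriorating, is precisely where the choice of $\mu$ must be synchronized with the $\beta_0$ threshold.
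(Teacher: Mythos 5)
Your overall architecture --- approximate $u$ at each $x_0$ by the smooth solution $\varphi_{t,x_0}$ of the constant-right-hand-side Dirichlet problem with mollified boundary data, combine the Bedford--Taylor $\mathcal C^{1,1}$ bound, Calabi's $\mathcal C^3$ bound and the comparison-principle stability estimate, and then iterate across a dyadic sequence of scales --- matches the paper's. But the conclusion you draw from this machinery is not attainable in a single pass, and the two places where the proposal goes wrong are linked.

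First, the claim that the second-order Taylor polynomials $P_{t_k,x_0}$ form a Cauchy sequence is false under the stated hypotheses. Writing $v_k=\varphi_{t_{k-1},x_0}-\varphi_{t_k,x_0}$ and applying the Schauder estimate (Proposition~\ref{sch}) to $\Delta v_k$, the paper obtains $[v_k]^*_{2,\gamma;B_{k+1}}\le C\,t_k^{(n+1)\mu(\beta-1)+2}$, which gives $|\nabla^2 v_k|\lesssim t_k^{(n+1)\mu(\beta-1)}$. Since $\beta<1$ this exponent is \emph{negative}, so $\sum_k|\nabla^2 v_k(x_0)|$ diverges: the Hessians $\nabla^2\varphi_{t_k,x_0}(x_0)$ are not controlled and do not converge. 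What \emph{does} converge is one derivative less --- the $\mathcal C^{1,\delta}$-type quantities $[v_k]_{1,\delta}$ and $|\nabla v_k|$, after a careful interpolation between the $L^\infty$ bound on $v_k$ and the $\mathcal C^{2,\gamma}$ bound, with $\gamma$ and $\mu$ tuned so that the resulting exponent is positive. That tuning forces $\delta<1$; one does not get a $\mathcal C^{1,1}$ estimate directly.

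Second, the assertion that ``optimizing over $s$ and $\mu$\dots produces the compatibility condition that reduces exactly to $\beta>\beta_0$'' is incorrect. Balancing your three error terms against $r^2$ produces a single-step threshold which is the value $\phi(1)$ in the paper's notation, and one can check that $\phi(1)>\beta_0$ strictly. The number $\beta_0$ is the \emph{fixed point} of the map $\phi$, not the single-step threshold: the paper reaches it only by proving the Key Lemma (improvement from $\mathcal C^{1,\beta}$ to $\mathcal C^{1,\delta}$ for $\beta>\phi(\delta)$) and then bootstrapping through the decreasing sequence $\delta_{i+1}=\phi(\delta_i)\downarrow\beta_0$, re-running the whole auxiliary-function construction at each stage with the improved H\"older exponent, which makes the Bedford--Taylor bound $[\varphi_{t,x_0}]_{1,1}\lesssim t^{\mu(\delta-1)}$ blow up more slowly and permits a larger $\delta$ in the next pass. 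This two-level iteration --- dyadic scales inside a single pass, and then a bootstrap over the H\"older exponent --- is the missing ingredient in your argument; without it you can only reach $\beta>\phi(1)$, not $\beta>\beta_0$.
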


\medskip

In order to  prove Theorem \ref{mthm}, we need the following lemma:

\medskip

\begin{lemma}[Key Lemma]\label{klm}
Let  $u\in\psh\cap\ \mathcal C(\O)$ be a weak solution of the complex Monge-Amp\`ere
equation (\ref{CMA}) with $0<\lambda \leq f \in\mathcal C^{\al}(\O) $ for some constant $\lambda$ and  some $\alpha\in(0,1]$. Given any $\delta \in (\beta_0(n,\alpha),1]$, assume that $u \in \mathcal{C}^{1,\beta}(\O)$ with any $\beta \in (\phi(\delta ),1)$, where
\begin{equation}\label{phi}\phi(\delta)=\phi_{n,\alpha}(\delta)=1-\frac{2((2+\alpha)(2-\delta)-(1+\delta))}{(2+\alpha)(1+\delta)(n+1)+((2+\alpha)(2-\delta)-(1+\delta))}.\end{equation}
Then $u \in \mathcal{C}^{1,\delta}(\O )$. Furthermore the $\mathcal C^{1,\delta}$ norm of $u$ in any relatively compact subset is estimable in terms of $n, \alpha, \lambda, \beta, \delta, ||u||_{\mathcal C^{1, \beta }(\Omega)},  ||f||_{\mathcal C^{\alpha}(\Omega)}$ and the distance of the set to $\partial\Omega$.
\end{lemma}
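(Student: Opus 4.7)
The strategy is to approximate $u$ locally by the smooth solutions $\varphi_{t,x_0}$ of the frozen-coefficient Dirichlet problem \eqref{vp1}, exploit the interior Bedford--Taylor $C^{1,1}$ and Calabi $C^3$ estimates on $\varphi_{t,x_0}$, establish a comparison-principle stability bound on $u - \varphi_{t,x_0}$, and combine these into a quadratic Campanato approximation of $u$ of order $r^{1+\delta}$ at each point.

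For $x_0 \in \Omega' \Subset \Omega$ with $d \le \dist(\Omega', \partial\Omega)$, $t \in (0, 1/2]$, and $\mu > 1$ to be chosen, \cite{CKNS} furnishes a smooth plurisubharmonic solution $\varphi_{t,x_0}$ of \eqref{vp1}. The standard mollifier bound $[\rho_{dt^\mu}\!\ast u]_{1,1;\partial B_{dt}(x_0)} \le C(dt^\mu)^{\beta-1}[u]_{1,\beta;\Omega}$, combined with Proposition \ref{c2}, yields $[\varphi_{t,x_0}]_{1,1;B_{dt/2}(x_0)} \le C(dt)^{\mu(\beta-1)}$, and Proposition \ref{c3} (applied with $L=0$, since $f(x_0)$ is constant) promotes this to $|D^3 \varphi_{t,x_0}| \le C(dt)^{\mu(\beta-1)(n+1)-1}$ on $B_{dt/4}(x_0)$. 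Using the submean-value inequality $\rho_{dt^\mu}\!\ast u \ge u$ for a radial mollifier of a plurisubharmonic function, the Hölder bound $|f^{1/n}(x) - f^{1/n}(x_0)| \le C[f]_\alpha (dt)^\alpha$, and the Bedford--Taylor comparison principle applied to barriers of the form $\varphi_{t,x_0} \pm A(|x-x_0|^2 - (dt)^2) - B$ with $A = O((dt)^\alpha)$ and $B = O((dt^\mu)^{1+\beta})$ (the needed $\det$-inequality coming from the concavity of $\det^{1/n}$ on positive Hermitian matrices), I will derive
\[
|u - \varphi_{t,x_0}|_{L^\infty(B_{dt}(x_0))} \le C\bigl[(dt^\mu)^{1+\beta} + (dt)^{2+\alpha}\bigr].
\]
The delicate point is plurisubharmonicity of the lower barrier, which forces $|A|$ to be at most the minimum eigenvalue of $(\varphi_{t,x_0})_{i\bar j}$; the bound $\min_i \lambda_i \ge f(x_0)\,[\varphi_{t,x_0}]_{1,1}^{-(n-1)} \ge C(dt)^{\mu(1-\beta)(n-1)}$ is compatible with $|A| = O((dt)^\alpha)$ exactly in the regime $\beta > \phi(\delta) > 1 - \alpha/(n-1)$, which is guaranteed by the hypothesis.

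Let $Q_{t,x_0}$ denote the second-order Taylor polynomial of $\varphi_{t,x_0}$ at $x_0$. For $x \in B_r(x_0) \subset B_{dt/2}$ the $C^3$ bound yields $|\varphi_{t,x_0}(x) - Q_{t,x_0}(x)| \le C(dt)^{\mu(\beta-1)(n+1)-1} r^3$, so
\[
\sup_{B_r(x_0)}|u - Q_{t,x_0}| \le C\bigl[(dt^\mu)^{1+\beta} + (dt)^{2+\alpha} + (dt)^{\mu(\beta-1)(n+1)-1} r^3\bigr].
\]
Given sufficiently small $r$, I set $dt = r^\sigma$ with $\sigma \in (0, 1]$ and seek $(\sigma,\mu)$ with $\mu > 1$ satisfying
\[
\sigma\mu(1+\beta) \ge 1+\delta, \quad \sigma(2+\alpha) \ge 1+\delta, \quad \sigma[1 + \mu(1-\beta)(n+1)] \le 2-\delta,
\]
which ensures each of the three terms is at most $r^{1+\delta}$. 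A direct algebraic analysis, using the identity $p + (1+\delta) = (2+\alpha)(2-\delta)$, shows admissible $(\sigma, \mu)$ with $\mu > 1$ exist precisely when $p(1+\beta) \ge q(1-\beta)$, i.e.\ when $\beta \ge \phi(\delta) = (q-p)/(q+p)$, matching the hypothesis. Applying this quadratic Campanato estimate along a dyadic sequence of scales $r_j = 2^{-j}r_0$ and invoking the Bernstein inequality $|\nabla P| \le Cr^{-1}\|P\|_{L^\infty(B_r)}$ for polynomials $P$ of bounded degree gives $|\nabla\varphi_{t_{j+1},x_0}(x_0) - \nabla\varphi_{t_j,x_0}(x_0)| \le Cr_j^\delta$; the resulting Cauchy sequence converges to $\nabla u(x_0)$ at rate $r^\delta$, and comparing linear approximations at nearby base points yields $|\nabla u(x_0) - \nabla u(y_0)| \le C|x_0 - y_0|^\delta$, hence $u \in C^{1,\delta}(\Omega')$ with the claimed norm estimate.

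The technical heart of the argument is the three-way scale balance in the final optimization: the boundary-mollification error $(dt^\mu)^{1+\beta}$, the Hölder oscillation $(dt)^{2+\alpha}$, and the Calabi-assisted Taylor remainder $(dt)^{\mu(\beta-1)(n+1)-1} r^3$ must simultaneously be $\le r^{1+\delta}$ under $\mu > 1$ and $\sigma \in (0,1]$, and the sharp threshold $\phi(\delta)$ emerges only from the interplay of all three. A closely related subtlety is the plurisubharmonicity of the lower barrier, whose eigenvalue constraint couples the Bedford--Taylor and Calabi bounds with the barrier height in a way that must be tracked carefully throughout.
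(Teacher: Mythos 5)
Your proposal uses the same auxiliary functions $\varphi_{t,x_0}$, the same Bedford--Taylor $C^{1,1}$ estimate, the same Calabi $C^3$ estimate, and the same comparison-principle stability bound as the paper, and your scale optimization reproduces the threshold $\phi(\delta)$ exactly; but the ``glue'' step is genuinely different. The paper fixes the $t$-scale, telescopes on $t_k=2^{-k}t$, writes $v_k=\varphi_{t_{k-1},x_0}-\varphi_{t_k,x_0}$, and obtains $[v_k]^*_{1,\delta}$ by first controlling $\Delta v_k$ in a weighted $C^{0,\gamma}$ norm (via Calabi $C^3$ plus interpolation), then invoking the weighted Schauder estimate for the Laplacian and interpolating back down to $C^{1,\delta}$; the condition on $\beta$ emerges by letting $\gamma\to1$ in that two-parameter optimization. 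You instead couple $t$ to the observation scale $r$ via $dt=r^\sigma$, use the Calabi bound directly as a third-order Taylor remainder for $\varphi_{t,x_0}$, and carry out a Campanato iteration on the degree-two Taylor polynomials $Q_{t,x_0}$. Your route bypasses the Schauder/interpolation machinery altogether, which is conceptually leaner, and the fact that both optimizations land on $\phi(\delta)=(q-p)/(q+p)$ with $p=(2+\alpha)(2-\delta)-(1+\delta)$ and $q=(2+\alpha)(1+\delta)(n+1)$ is a nice cross-check.

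Two points deserve care. First, in the comparison step you perturb $\varphi_{t,x_0}$ in both directions, $\varphi_{t,x_0}\pm A(|x-x_0|^2-(dt)^2)-B$; for the minus sign you correctly note the plurisubharmonicity constraint $A\le\lambda_{\min}\big((\varphi_{t,x_0})_{i\bar j}\big)$, but your lower bound on $\lambda_{\min}$ comes from the Bedford--Taylor estimate, which controls $D^2\varphi_{t,x_0}$ only on the interior ball $B_{(1-s)dt}(x_0)$, not up to $\partial B_{dt}(x_0)$; yet the barrier must be plurisubharmonic on all of $B_{dt}(x_0)$ for the Bedford--Taylor comparison principle to apply. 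The paper sidesteps this entirely by perturbing the \emph{other} competitor: it compares $\rho_{dt^\mu}\!*u + Ct^\alpha(|z-x_0|^2-d^2t^2)$ against $\varphi_{t,x_0}$, and $\varphi_{t,x_0}+Ct^\alpha(|z-x_0|^2-d^2t^2)$ against $u$; since both perturbations \emph{add} $Ct^\alpha|z-x_0|^2$, plurisubharmonicity is automatic and no eigenvalue constraint (or secondary hypothesis $\beta>1-\alpha/(n-1)$) ever arises. Adopting the paper's symmetric barrier would repair your argument at no cost. Second, the assertion that the Cauchy sequence $\nabla\varphi_{t_j,x_0}(x_0)$ ``converges to $\nabla u(x_0)$'' is stated without justification; in the paper this is proved by a separate blow-up argument comparing $W_t(x)=t^{-1}(\varphi_{t,x_0}(x_0+tx)-\varphi_{t,x_0}(x_0))$ and $V_t(x)=t^{-1}(u(x_0+tx)-u(x_0))$ using the hypothesis $u\in C^{1,\beta}$. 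In your framework one can instead deduce it from the Campanato data (the constant terms $\varphi_{t_j,x_0}(x_0)\to u(x_0)$, the quadratic coefficients grow at most like $r_j^{\delta-1}$, so the linear parts necessarily converge to $\nabla u(x_0)$), but this step should be spelled out rather than asserted.
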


\medskip

We first give some information about $\phi$ defined in (\ref{phi}), and then use Lemma \ref{klm} to prove Theorem \ref{mthm}.
In fact, $\b_0$ defined by (\ref{b0}) is the fixed point of $\phi$, i.e.
\begin{equation}\phi(\b_0)=\b_0.\end{equation}
Furthermore $\phi$ is a strictly increasing function on $[\b_0,1]$ and satisfies
\begin{equation}
\phi(\d)<\d,
\end{equation}
for any $\d\in(\b_0,1]$.

\medskip

\begin{proof}[Proof of Theorem \ref{mthm}] Consider the sequence $\{\delta_i\}_{i=1}^{\infty}$ defined by
\begin{equation*}
\begin{aligned}
\delta_1 &=\phi(1)<1,\\
\delta_{i+1}&=\phi(\delta_i),\qquad i=1,2,\cdots.
\end{aligned}
\end{equation*}
One can easily check that $\{\delta_i\}_{i=1}^{\infty}$ is a decreasing sequence which converges to $\beta_0$.
Since $\beta \in (\beta_0,1)$, we can find some $k \geq 1$, so that $\beta > \delta_k$. We need only to prove that Theorem \ref{mthm} is true if $\beta \in (\delta_{k}, 1)$ for all $k\geq1$.
\begin{itemize}
\item[(1)]If $k=1$, $\beta>\phi(1)$, Theorem \ref{mthm} is a direct corollary of Lemma \ref{klm}.
\item[(2)]Assuming that  Theorem \ref{mthm} is true for  some $l\geq 1$, i.e.
if the weak solution $u\in\mathcal{C}^{1,\gamma}(\Omega)$ with any $\gamma\in(\delta_l,1)$, then $u\in\mathcal{C}^{1,1}(\Omega)$.
If  $\beta>\delta_{l+1}=\phi(\delta_l)$, we can find some $\gamma\in(\delta_l,1]$, such that $\beta>\phi(\gamma)$. By Lemma \ref{klm}, we have $u\in\mathcal{C}^{1,\gamma}(\Omega)$.
Then  the assumption implies $u\in\mathcal{C}^{1,1}(\Omega)$.
To obtain the required estimate on $||u||_{\mathcal{C}^{1,1}(\overline{\Omega '})}$ for some relative compact subset $\overline{\Omega '}$ of $\Omega$, we consider the domain
\begin{equation*}\Omega''=\left\{x\in\Omega|\dist(x,\partial\Omega)>\textstyle{\frac{1}{2}}\dist(\overline{\Omega'},\partial\Omega)\right\}.\end{equation*}
We can apply Lemma \ref{klm} to $u$ on $\Omega$ to estimate $||u||_{\mathcal{C}^{1,\gamma}(\overline{\Omega ''})}$. Then use the assumption to $u$ on $\Omega''$ to estimate $||u||_{\mathcal{C}^{1,1}(\overline{\Omega '})}$. So Theorem \ref{mthm} is true for $k=l+1$.
\end{itemize}
Combining (1) and (2), we know that Theorem \ref{mthm} is true for all $k\ge 1$. This concludes the proof of Theorem \ref{mthm}.
\end{proof}

\hspace{0.3cm}

Now we move on to the proof of Lemma \ref{klm}. It is based on the proof of Theorem 1.1 of \cite{DZhZh} and  a family of auxiliary functions $\varphi_{t,x_0}$, as mentioned in the introduction. In the following, we give the exact definition of the auxiliary functions.

\medskip

Let $\rho(z)$ be a smooth radially symmetrical function on $\mathbb{C}^n$ satisfying
\begin{itemize}
\item[1).] $\rho \ge 0$ and $\textrm{supp}(\rho) \subset \{|z|<1\}$;
\item[2).] $\int_{\mathbb{C}^n}\rho=1$;
\item[3).] $| \nabla \! \rho| \leq C_1(n), \quad | \nabla^2 \! \rho| \leq C_2(n)$,
where $C_1(n)$ and $C_2(n)$ are constants depending only on $n$.
\end{itemize}
For any $\varepsilon > 0$, we set
\begin{equation}\rho_{\varepsilon}(z)=\varepsilon^{-2n}\rho \left(\frac{z}{\varepsilon}\right).\end{equation}
For any point $x_0 \in \Omega$,  $\varphi_{t,x_0}$  is defined by
\begin{equation}\label{vp2}
\left\{
\begin{array}{c}
\varphi_{t,x_0}\in PSH(B_{dt}(x_0))\cap C(\bar{B}_{dt}(x_0)),\\
\begin{array}{cl}
\det((\varphi_{t,x_0})_{i\bar{j}})=f(x_0), & \textrm{in } B_{dt}(x_0),\\
\varphi_{t,x_0}=\rho_{dt^{\mu}}\!*\! u, & \textrm{on } \partial B_{dt}(x_0),
\end{array}
\end{array}
\right.
\end{equation}
where $d \leq \textrm{dist}(x_0,\partial\Omega)$, $0<t\leq\frac{1}{2}$ and $\mu >1$.
By the results of \cite{CKNS}, we know that every $\varphi_{t,x_0}$ is smooth.

\medskip

Note that we only need to consider the case $\beta <\delta\leq 1$, and we can assume $[u]_{1,\beta;\Omega}$, $|f^{\frac{1}{n}}|_{0;\Omega}$ and $[f^{\frac{1}{n}}]_{0,\alpha;\Omega}$ are finite.
For convenience, we use $C$ to denote constants depending only on $n$, $\alpha$, $\beta$, $\delta$, $[u]_{1,\beta;\Omega}$, $\lambda$, $|f^{\frac{1}{n}}|_{0;\Omega}$, $[f^{\frac{1}{n}}]_{0,\alpha;\Omega}$ and $d$. We also use $C(A,\cdots)$ to denote constants  depending on additional $A,\cdots$.

\medskip

From the definition, it is easy to check that
\begin{equation}u\leq \rho_{dt^{\mu}}\!*\! u \leq u+Ct^{\mu (1+\beta)},\end{equation}
and
\begin{equation}\label{C2*}|\nabla^2 (\rho_{dt^{\mu}}\!*\! u)|\leq Ct^{\mu(\beta-1)}.\end{equation}
By Theorem 5.7 of \cite{BT}, we know that
\begin{equation}(\det((\rho_{dt^{\mu}}\!*\!u)_{i\bar{j}}))^{\frac{1}{n}}\geq \rho_{dt^{\mu}}\!*\!f^{\frac{1}{n}}
\geq f(x_0)^{\frac{1}{n}}-Ct^{\alpha}.\end{equation}
Using the concavity of $\{\det (\cdot )\}^{\frac{1}{n}}$, for positive defined Hermitian matrices A and B, we have
\begin{equation*}(\det(A+tB))^{\frac{1}{n}} \geq (\det(A))^{\frac{1}{n}}+t(\det(B))^{\frac{1}{n}}.\end{equation*}
By this inequality, the definition of $\varphi_{t,x_0}$ and the known estimates on $\rho_{dt^{\mu}}\!*\!u$, we can find some sufficiently large constant $C$ such that
\begin{equation*}\left\{ \begin{split}
&(\det((\rho_{dt^{\mu}}\!*\!u+Ct^{\alpha}(|z-x_0|^2-d^2t^2))_{i\bar{j}}))^{\frac{1}{n}} \geq (\det((\varphi_{t,x_0})_{i\bar{j}}))^{\frac{1}{n}}, & \textrm{in } B_{dt}(x_0),\\
&\rho_{dt^{\mu}}\!*\!u=\varphi_{t,x_0}, &\textrm{on } \partial B_{dt}(x_0),
\end{split}
\right.
\end{equation*}
and
\begin{equation*}\left\{ \begin{split}
&(\det(u_{i\bar{j}}))^{\frac{1}{n}} \leq (\det((\varphi_{t,x_0}+Ct^{\alpha}(|z-x_0|^2-d^2t^2))_{i\bar{j}}))^{\frac{1}{n}}, & \textrm{in } B_{dt}(x_0),\\
&u+Ct^{\mu(1+\beta)}\geq \varphi_{t,x_0}, &\textrm{on } \partial B_{dt}(x_0).
\end{split}
\right.
\end{equation*}
The comparison principle of continuous plurisubharmonic functions (Theorem A of \cite{BT}) implies
\begin{equation*}\rho_{dt^{\mu}}\!*\!u+Ct^{\alpha}(|z-x_0|^2-d^2t^2)\leq \varphi_{t,x_0}\leq u+Ct^{\mu(1+\beta)}-Ct^{\alpha}(|z-x_0|^2-d^2t^2).\end{equation*}
So we have
\begin{equation}\label{3.9}\sup_{x\in B_{dt}(x_0)}|\varphi_{t,x_0}-u|\leq C(t^{\mu(1+\beta)}+t^{2+\alpha}).\end{equation}
Using the interior $\mathcal C^{1,1}$ estimate due to Bedford and Taylor (Proposition \ref{c2}) and the $\mathcal C^2$ estimate (\ref{C2*}) of $\rho_{dt^{\mu}}\!*\! u$, we have the following estimate
\begin{equation}\label{3.10}
[\varphi_{t,x_0}]_{1,1;B_{\frac{1}{2}dt}(x_0)} \leq C(Ct^{\mu(\beta-1)}+f(x_0)^{\frac{1}{n}})\leq Ct^{\mu(\beta-1)}.
\end{equation}

\medskip

We have the following claim:
\begin{clm} \label{clm} By choosing proper $\mu=\mu(\beta,\delta,\alpha)$, we can obtain
\begin{itemize}
\item[(1)] \begin{equation}\lim_{t\rightarrow 0}\nabla \varphi_{t,x_0}(x_0)=\nabla u(x_0).\end{equation}
Moreover, we have
\begin{equation}|\nabla \varphi_{t,x_0}(x_0)-\nabla u(x_0)|\leq C(dt)^{\delta}.\end{equation}

\item[(2)]
\begin{equation}|\nabla \varphi_{t,x_0}(x)-\nabla \varphi_{t,x_0}(y)|\leq C|x-y|^{\delta},\end{equation}for any  $x, y \in B_{\frac{1}{4}dt}(x_0)$.

\item[(3)] Let $x_0,y_0 \in \Omega$, and $d\leq \min\{\textrm{dist}(x_0,\partial \Omega),\textrm{dist}(y_0,\partial \Omega)\}$.
If $|x_0-y_0|\leq \frac{1}{8}dt$ and $B_{\frac{1}{4}dt}(z)\subset B_{\frac{1}{2}dt}(x_0)\cap B_{\frac{1}{2}dt}(y_0)$, then we have
\begin{equation}|\nabla \varphi_{t,x_0}-\nabla \varphi_{t,y_0}|\leq C(dt)^{\delta},\end{equation}
on $B_{\frac{1}{8}dt}(z)$.
\end{itemize}
\end{clm}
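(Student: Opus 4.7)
The three parts all hinge on the symmetric Taylor trick: for a $\mathcal{C}^3$ function $\psi$ and a small increment $v$,
\begin{equation*}
\psi(x+v) - \psi(x-v) = 2\nabla\psi(x)\cdot v + R, \qquad |R| \leq C\,\|\nabla^3\psi\|_{L^\infty}|v|^3,
\end{equation*}
the quadratic Hessian term cancelling by parity. The three quantitative inputs are (i) the $\mathcal{C}^0$ closeness (\ref{3.9}); (ii) the Bedford--Taylor Hessian bound (\ref{3.10}), $[\varphi_{t,x_0}]_{1,1;B_{dt/2}(x_0)} \leq Ct^{\mu(\beta-1)}$; and (iii) the Calabi third-order estimate (Proposition \ref{c3}), which applies because $\det((\varphi_{t,x_0})_{i\bar j})=f(x_0)$ is constant (so $L=0$), giving with $K=Ct^{\mu(\beta-1)}$ the decisive bound $\|\nabla^3\varphi_{t,x_0}\|_{L^\infty(B_{dt/4}(x_0))} \leq Ct^{(n+1)\mu(\beta-1)-1}$.

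For Part (1), apply the Taylor trick to $\psi = \varphi_{t,x_0} - u$ at $x=x_0$ with $|v|=(dt)^\sigma$: the $u$-terms contribute an extra $O([u]_{1,\beta}|v|^{1+\beta})$ H\"older remainder, and the left-hand side is $O(t^{\mu(1+\beta)}+t^{2+\alpha})$ by (\ref{3.9}). Choosing $v$ in the direction of $\nabla\varphi_{t,x_0}(x_0)-\nabla u(x_0)$ and dividing by $|v|$ yields
\begin{equation*}
|\nabla\varphi_{t,x_0}(x_0)-\nabla u(x_0)| \leq C\bigl(t^{\mu(1+\beta)-\sigma} + t^{2+\alpha-\sigma} + t^{(n+1)\mu(\beta-1)+2\sigma-1} + t^{\sigma\beta}\bigr).
\end{equation*}
Forcing each exponent $\geq \delta$ is a linear programming problem in $(\mu,\sigma)$; its feasibility threshold on $\beta$ is exactly $\beta>\phi(\delta)$, and equating the binding constraints recovers the formula (\ref{phi}).

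For Part (2), the Part~(1) argument is invariant under translating the base point to any $x\in B_{dt/4}(x_0)$ (for $|v|\leq dt/4$, the points $x\pm v$ stay in $B_{dt/2}(x_0)$, so both (\ref{3.9}) and the Calabi bound still apply), yielding $|\nabla\varphi_{t,x_0}(x)-\nabla u(x)| \leq C(dt)^\delta$ pointwise on $B_{dt/4}(x_0)$. The desired H\"older estimate then follows by a split at $|x-y|=(dt)^\tau$ for a well-chosen $\tau$: below the threshold one uses the Bedford--Taylor Lipschitz bound $\leq Ct^{\mu(\beta-1)}|x-y|$; above it one triangulates against $\nabla u$ using the pointwise closeness just obtained and the $\mathcal{C}^{1,\beta}$ seminorm of $u$. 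The same window $\beta>\phi(\delta)$ lets $\mu$ and $\tau$ be chosen compatibly with Part~(1). For Part (3), apply the Taylor trick to $\psi:=\varphi_{t,x_0}-\varphi_{t,y_0}$ at $x\in B_{dt/8}(z)$: on the overlap $|\psi|\leq 2\sup|\varphi-u|$ by (\ref{3.9}) (up to a lower-order correction $O(|f(x_0)-f(y_0)|^{1/n}(dt)^2)=O((dt)^{2+\alpha/n})$ from the slightly different right-hand sides), and $\|\nabla^3\psi\|_{L^\infty}$ is controlled by applying Calabi to each of $\varphi_{t,x_0}$, $\varphi_{t,y_0}$. The exponent balancing of Part~(1), now without the $[u]_{1,\beta}|v|^{1+\beta}$ remainder (and hence without the $t^{\sigma\beta}\geq t^\delta$ constraint), produces $|\nabla\psi(x)| \leq C(dt)^\delta$.

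The chief obstacle is the exponent juggling of Part~(1): four competing bounds --- from the mollification defect, the $\mathcal{C}^\alpha$ oscillation of $f$, Calabi's $\mathcal{C}^3$ estimate (which blows up as $t\to 0$), and the $\mathcal{C}^{1,\beta}$ H\"older remainder of $u$ --- must simultaneously be forced below $t^{\delta+\sigma}$. The resulting linear programming problem is precisely what determines the threshold $\phi(\delta)$ of (\ref{phi}); once $\mu,\sigma$ are fixed in Part~(1), Parts~(2) and~(3) follow by localization and by replacing one auxiliary function with another.
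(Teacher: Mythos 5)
Your plan rests on the bound $\|\nabla^3\varphi_{t,x_0}\|_{L^\infty(B_{dt/4}(x_0))}\leq Ct^{(n+1)\mu(\beta-1)-1}$, which you attribute to Proposition~\ref{c3}. That is not what the proposition gives. Calabi's estimate controls $[(\varphi_{t,x_0})_{i\bar{j}}]^*_{1}$, i.e.\ the \emph{mixed} third derivatives $\varphi_{i\bar{j}k}$ (and, by taking real-valued conjugates, $\varphi_{ij\bar{k}}$), but it says nothing about the pure holomorphic derivatives $\varphi_{ijk}$: the complex Monge--Amp\`ere equation is an equation on the complex Hessian $(\varphi_{i\bar{j}})$ only, and the PSH/Laplacian bounds do not constrain $\varphi_{ij}$, let alone its derivatives. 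Your symmetric Taylor remainder $R$ is a contraction of the full real third-derivative tensor against $v\otimes v\otimes v$, so it involves $\varphi_{ijk}$ in an essential way. Without that control the estimate $|R|\leq C\|\nabla^3\psi\|_{L^\infty}|v|^3$ is unavailable, and the exponent-balancing in Part~(1) --- and hence Parts~(2) and~(3), which you derive from it --- collapses.

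The paper avoids this precisely by routing through the Laplacian. Calabi gives $[\Delta\varphi_{t,x_0}]^*_{1;B_{dt/2}(x_0)}\leq Ct^{(n+1)\mu(\beta-1)}$, which involves only mixed third derivatives and is legitimate. Interpolation converts this to a $\mathcal{C}^{0,\gamma}$ bound on $\Delta\varphi_{t,x_0}$, and then the Schauder estimate for the \emph{Laplace equation} (Proposition~\ref{sch}) applied to the dyadic differences $v_k=\varphi_{t_{k-1},x_0}-\varphi_{t_k,x_0}$ upgrades a $\mathcal{C}^{0,\gamma}$ control of $\Delta v_k$ plus a $\mathcal{C}^0$ bound on $v_k$ to a genuine $\mathcal{C}^{2,\gamma}$ bound on $v_k$ --- all second derivatives, not just the mixed ones --- which is then interpolated down to $\mathcal{C}^{1,\delta}$ and summed geometrically. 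Two further structural differences worth noting: (a) the paper never differences $\varphi_{t,x_0}$ directly against $u$ in the core estimate (it differences two smooth approximants $u_{k-1},u_k$), so the extra constraint $\sigma\beta\geq\delta$ from the $[u]_{1,\beta}|v|^{1+\beta}$ remainder in your version does not appear, and the resulting linear program is different from yours --- I would not expect it to reproduce the threshold $\phi(\delta)$ of (\ref{phi}); and (b) Parts~(1)--(3) in the paper all come from the single $\mathcal{C}^{1,\delta}$ bound on $v_k$ and the telescoping sum, rather than from three separate Taylor arguments. If you want to salvage the single-scale idea, you would have to replace the $\nabla^3$ bound by a $\mathcal{C}^{2,\gamma}$ bound obtained exactly as in the paper (Calabi $\to$ interpolation $\to$ Schauder for $\Delta$), at which point you essentially recover the paper's argument.
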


\medskip

Using this claim, we can prove the key lemma.
\begin{proof}[Proof of the key lemma] Let $\overline{\Omega '}$ be a relatively compact subset of $\Omega$ and $d=\dist(\overline{\Omega '},\partial\Omega)$.\\
For any distinct $x,y\in \Omega$, we have $d\leq\min\{\dist(x,\partial \Omega),\dist(y,\partial \Omega)\}$.
\begin{itemize}
\item If $|x-y| \geq \frac{1}{16}d$, then we have
\begin{equation}
\frac{|\nabla u(x)-\nabla u(y)|}{|x-y|^{\delta}} \leq (16d^{-1})^{\delta-\beta}\frac{|\nabla u(x)-\nabla u(y)|}{|x-y|^{\beta}}\leq C.
\end{equation}
\item If $|x-y| < \frac{1}{16}d$, we set $t=8\frac{|x-y|}{d}<\frac{1}{2}$. Then
\begin{equation}
\begin{aligned}
|\nabla u(x)-\nabla u(y)| \leq & |\nabla u(x)-\nabla \varphi_{t,x}(x)|+|\nabla \varphi_{t,x}(x)-\nabla \varphi_{t,x}(y)| \\
& + |\nabla \varphi_{t,x}(y)-\nabla \varphi_{t,y}(y)|+|\nabla \varphi_{t,y}(y)-\nabla u(y)|.
\end{aligned}
\end{equation}
By (1) and (2) of Claim \ref{clm}, we have
\begin{equation*}|\nabla u(x)-\nabla \varphi_{t,x}(x)|\leq C(dt)^{\delta},\end{equation*}
\begin{equation*}|\nabla \varphi_{t,y}(y)-\nabla u(y)|\leq C(dt)^{\delta},\end{equation*}
and
\begin{equation*}|\nabla \varphi_{t,x}(x)-\nabla \varphi_{t,x}(y)|\leq C(dt)^{\delta}.\end{equation*}
Since $|x-y| = \frac{1}{8}dt$ and $B_{\frac{1}{4}dt}(y)\subset B_{\frac{1}{2}dt}(x)\cap B_{\frac{1}{2}dt}(y)$, by (3) of Claim \ref{clm}
\begin{equation*}|\nabla \varphi_{t,x}(y)-\nabla \varphi_{t,y}(y)|\leq C(dt)^{\delta}.\end{equation*}
These inequalities imply
\begin{equation}
|\nabla u(x)-\nabla u(y)| \leq C(dt)^{\delta}\leq C|x-y|^{\delta} .
\end{equation}
\end{itemize}

\medskip

We conclude that
\begin{equation}[u]_{1,\delta;\Omega'}\leq C.\end{equation}
\end{proof}

To complete the proof of Theorem \ref{mthm}, we only need to prove Claim \ref{clm}.

\hspace{0.3cm}

\section{Properties of the auxiliary functions}
\setcounter{equation}{0}

In the previous section, we have given some preliminary properties of the auxiliary functions. In this section, we will show more estimates on them and give a proof of Claim \ref{clm}.

\medskip

The inequality (\ref{3.10}) implies that
\begin{equation}\label{Lphi}
\Delta\varphi_{t,x_0} \leq Ct^{\mu(\beta-1)},\end{equation}
on $B_{\frac{1}{2}dt}(x_0)$.
Applying Proposition \ref{c3} to $\varphi_{t,x_0}$ on $B_{\frac{1}{2}dt}(x_0)$, we have
\begin{equation}
[(\varphi_{t,x_0})_{i\bj}]^*_{1;B_{\frac{1}{2}dt}(x_0)}\leq Ct^{(n+1)\mu(\beta-1)},
\end{equation}
and consequently
\begin{equation}\label{Lphi1}
[\Delta\varphi_{t,x_0}]^*_{1;B_{\frac{1}{2}dt}(x_0)}\leq Ct^{(n+1)\mu(\beta-1)}.
\end{equation}
Applying Proposition \ref{inter} to $\Delta \varphi_{t,x_0}$, we have
\begin{equation}\label{Lphig}
|\Delta\varphi_{t,x_0}|^*_{0,\gamma;B_{\frac{1}{2}dt}(x_0)}\leq Ct^{(n+1)\mu(\beta-1)},
\end{equation}
for any $\gamma \in (0,1]$.

\medskip

\begin{proof}[Proof of  Claim \ref{clm}]
For fixed $x_0 \in \Omega$, $d\leq \textrm{dist}(x_0,\partial\Omega)$ and $t\leq\frac{1}{2}$, we set
\begin{equation*}
t_k=2^{-k}t,
\qquad u_k=\varphi_{t_k,x_0},
\qquad B_k=B_{dt_k}(x_0),
\end{equation*}
and
\begin{equation}\label{defvk} v_k =u_{k-1}-u_k ,\end{equation}
where $k=0,1,2,\cdots$ and $k\geq1$ in (\ref{defvk}).
By (\ref{3.9}) and (\ref{Lphig}), for $k\geq1$ and any $\gamma\in(0,1)$, we have
\begin{align*}
&|u_{k-1}-u|_{0;B_{k+1}}\leq C(t_k^{\mu(1+\beta)}+t_k^{2+\alpha}),\\
&|u_k-u|_{0;B_{k+1}}\leq C(t_k^{\mu(1+\beta)}+t_k^{2+\alpha}),\\
&|\Delta u_{k-1}|^*_{0,\gamma;B_{k+1}} \leq Ct_k^{(n+1)\mu(\beta-1)},\\
&|\Delta u_k|^*_{0,\gamma;B_{k+1}} \leq Ct_k^{(n+1)\mu(\beta-1)},
\end{align*}
and consequently
\begin{align}
\label{vk0}&|v_k|_{0;B_{k+1}}\leq C(t_k^{\mu(1+\beta)}+t_k^{2+\alpha}),\\
\label{Dvk}&|\Delta v_k|^*_{0,\gamma;B_{k+1}} \leq Ct_k^{(n+1)\mu(\beta-1)}.
\end{align}
By the above estimates and Proposition \ref{sch}, we have the following $\mathcal{C}^{2,\gamma}$ estimates for $v_k$
\begin{equation}\label{vk2a}
[v_k]^*_{2,\gamma;B_{k+1}}\leq C(\gamma)t_k^{(n+1)\mu(\beta-1)+2}.
\end{equation}
Applying Proposition \ref{inter} to $v_k$, we have
\begin{equation}\label{vk1d}
[v_k]^*_{1,\delta;B_{k+1}}\leq C(\gamma)\left(\varepsilon^{-(1+\delta)}(t_k^{\mu(1+\beta)}+t_k^{2+\alpha})+\varepsilon^{1+\gamma-\delta}t_k^{(n+1)\mu(\beta-1)+2}\right),
\end{equation}
for any $\varepsilon\in(0,1]$.
Let
\begin{equation}
\varepsilon^{2+\gamma}=\left(t_k^{\mu(1+\beta)}+t_k^{2+\alpha}\right)\left(2t_k^{(n+1)\mu(\beta-1)+2}\right)^{-1},
\end{equation}
then we obtain
\begin{equation}\label{vk1d1}
[v_k]^*_{1,\delta;B_{k+1}}\leq C(\gamma)\left(t_k^{\frac{1+\gamma-\delta}{2+\gamma}\mu(1+\beta)}+t_k^{\frac{1+\gamma-\delta}{2+\gamma}(2+\alpha)}\right)t_k^{\frac{1+\delta}{2+\gamma}\left((n+1)\mu(\beta-1)+2\right)},
\end{equation}
which implies:
\begin{equation}\label{vk1d2}
[v_k]_{1,\delta;B_{k+2}}\leq C(\gamma)\left(t_k^{\frac{1+\gamma-\delta}{2+\gamma}\mu(1+\beta)}+t_k^{\frac{1+\gamma-\delta}{2+\gamma}(2+\alpha)}\right)
t_k^{\frac{1+\delta}{2+\gamma}\left((n+1)\mu(\beta-1)+2\right)-(1+\delta)}.
\end{equation}

\medskip

For further consideration, we want to obtain the following estimate $[v_k]_{1,\delta;B_{k+2}}\le At_k^{\epsilon}$ for some positive constants $A$ and $\epsilon$ which don't depend on $t$ or $k$.
According to (\ref{vk1d2}), we need to find some $\gamma\in(0,1)$ and  $\mu>1$ such that
\begin{equation}\label{mu0}
\left\{\begin{array}{l}
\frac{1+\gamma-\delta}{2+\gamma}\mu(1+\beta)+\frac{1+\delta}{2+\gamma}\left(\mu(\beta-1)(n+1)+2\right)-(1+\delta)>0,\\
\frac{1+\gamma-\delta}{2+\gamma}(2+\alpha)+\frac{1+\delta}{2+\gamma}\left(\mu(\beta-1)(n+1)+2\right)-(1+\delta)>0.
\end{array}
\right.
\end{equation}
By solving (\ref{mu0}) as an inequalities system of unknown $\mu$, we have
\begin{equation}\label{mu1}
\frac{\gamma(1+\delta)}{(1+\gamma-\beta)(1+\beta)-(1+\delta)(1-\beta)(n+1)}<\mu<\frac{(1+\gamma-\delta)(2+\alpha)-\gamma(1+\delta)}{(1+\delta)(1-\beta)(n+1)}.
\end{equation}
Then we find the condition that $\beta$, $\delta$ and $\gamma$ should satisfy is
\begin{equation}
\frac{\gamma(1+\delta)}{(1+\gamma-\beta)(1+\beta)-(1+\delta)(1-\beta)(n+1)}<\frac{(1+\gamma-\delta)(2+\alpha)-\gamma(1+\delta)}{(1+\delta)(1-\beta)(n+1)},
\end{equation}
equivalently
\begin{equation}\label{bd1}
\beta>1-\frac{2\left((2+\alpha)(1+\gamma-\delta)-(1+\delta)\right)}{(2+\alpha)(1+\delta)(n+1)+\left((2+\alpha)(1+\gamma-\delta)-(1+\delta)\right)}.
\end{equation}
By the conditions of Lemma \ref{klm}, we already have
\begin{equation}\label{bd0}
\beta>1-\frac{2\left((2+\alpha)(2-\delta)-(1+\delta)\right)}{(2+\alpha)(1+\delta)(n+1)+\left((2+\alpha)(2-\delta)-(1+\delta)\right)}.
\end{equation}
Using (\ref{bd0}), we can choose some $\gamma=\gamma(n,\alpha,\beta,\delta)$ sufficiently close to 1, such that (\ref{bd1}) holds.
Then we can return to (\ref{mu1}) and choose some $\mu=\mu(n,\alpha,\beta,\delta)$ such that (\ref{mu0}) holds.
We will use the chosen $\mu$ and $\gamma$ till we complete the proof of Claim \ref{clm}. It is remarkable that $\mu$ and $\gamma$ depend only on $n$, $\alpha$, $\beta$ and $\delta$.

Now that (\ref{mu0}) holds, we can find some $\epsilon=\epsilon(n,\alpha,\beta,\delta)$ such that
\begin{equation}
\left\{\begin{array}{l}
\frac{1+\gamma-\delta}{2+\gamma}\mu(1+\beta)+\frac{1+\delta}{2+\gamma}\left(\mu(\beta-1)(n+1)+2\right)-(1+\delta)>\epsilon,\\
\frac{1+\gamma-\delta}{2+\gamma}(2+\alpha)+\frac{1+\delta}{2+\gamma}\left(\mu(\beta-1)(n+1)+2\right)-(1+\delta)>\epsilon,
\end{array}
\right.
\end{equation}
and then we obtain the desired estimate
\begin{equation}\label{vk1d3}
[v_k]_{1,\delta;B_{k+2}}\le Ct_k^{\epsilon}.
\end{equation}
Moreover, by (\ref{mu0}), (\ref{vk0}) and (\ref{vk1d1}), it holds that
\begin{equation}
|v_k|_{0;B_{k+1}}\leq Ct_k^{1+\delta},\qquad
[v_k]^*_{1,\delta;B_{k+1}}\leq Ct_k^{1+\delta}.
\end{equation}
Applying Proposition \ref{inter}, we have
\begin{equation}
[v_k]^*_{1;B_{k+1}}\leq Ct_k^{1+\delta},
\end{equation}
which implies
\begin{equation}\label{vk1}
[v_k]_{1;B_{k+2}}\leq Ct_k^{\delta}.
\end{equation}

\medskip

Now we can give a $\mathcal C^{1,\delta}$ estimate for $\varphi_{s,x_0}$ on $B_{\frac{1}{4}ds}(x_0)$.
\begin{itemize}
\item[(a)] If $s\in \big(\frac{1}{4}, \frac{1}{2}\big]$, by (\ref{3.10}), we have
\begin{equation*}[\varphi_{s,x_0}]_{1,1;B_{\frac{1}{2}ds}(x_0)}\leq Cs^{\mu(\beta-1)}\leq C,\end{equation*}
and then
\begin{equation}\label{4.24}[\varphi_{s,x_0}]_{1,\delta;B_{\frac{1}{2}ds}(x_0)}\leq (ds)^{1-\delta}[\varphi_{s,x_0}]_{1,1;B_{\frac{1}{2}ds}(x_0)}\leq C.\end{equation}
\item[(b)] If $s\in (0,\frac{1}{4}]$, we can find some integer $k\geq 1$ such that
\begin{equation*}\textstyle t=2^{k}s\in \big(\frac{1}{4},\frac{1}{2}\big].\end{equation*}
Setting $B=B_{\frac{1}{4}ds}(x_0)=B_{k+2}$, we have
\begin{equation}
\begin{split}
[\varphi_{s,x_0}]_{1,\delta;B}& =[u_k]_{1,\delta;B}=\left[\varphi_{t,x_0}-\textstyle{\sum\limits_{i=1}^k} v_k\right]_{1,\delta;B}\\
& \leq [\varphi_{t,x_0}]_{1,\delta;B}+\textstyle{\sum\limits_{i=1}^k} [v_i]_{1,\delta;B}\\
&\leq C+C\textstyle{\sum\limits_{i=1}^k} t_i^{\epsilon},
\end{split}
\end{equation}
where the last inequality holds by (a) and the inequality (\ref{vk1d3}).
Since
\begin{align*}
\textstyle{\sum\limits_{i=1}^k} t_i^{\epsilon} =t^{\epsilon}\textstyle{\sum\limits_{i=1}^k} 2^{-\epsilon i}\leq t^{\epsilon}\textstyle{\sum\limits_{i=1}^{\infty}} 2^{-\epsilon i}
\leq 2^{-\epsilon}(1-2^{-\epsilon})^{-1},
\end{align*}
we again obtain that
$[\varphi_{s,x_0}]_{1,\delta;B}\leq C $.
\end{itemize}

By the discussion above,  we have
\begin{equation}\label{c2_2}
[\varphi_{s,x_0}]_{1,\delta;B_{\frac{1}{4}ds}(x_0)}\leq C ,
\end{equation}
for all $s\in\big(0,\frac{1}{2}\big]$. The estimate (\ref{c2_2}) implies
(2) of Claim \ref{clm}.\\

\medskip

Now we want to prove
\begin{equation}\label{clm11}
\lim_{t\rightarrow 0}|\nabla \varphi_{t,x_0}(x_0)-\nabla u(x_0)|=0.
\end{equation}
Let us consider the following functions
\begin{align}
W_t(x) & =\frac{\varphi_{t,x_0}(x_0+tx)-\varphi_{t,x_0}(x_0)}{t},\\
V_t(x) & =\frac{u(x_0+tx)-u(x_0)}{t},
\end{align}
on $B_{\frac{1}{4}d}(0)$.
By (\ref{mu0}), we know that
\begin{equation*}\mu(1+\beta)>1+\delta,\end{equation*}
and
\begin{equation}
|\varphi_{t,x_0}-u|\leq C(t^{\mu(1+\beta)}+t^{2+\alpha})\leq Ct^{1+\delta}.
\end{equation}
This implies
\begin{equation}|W_t-V_t|\leq Ct^{\delta}.\end{equation}
Using the Taylor expansion, the $\mathcal C^{1,\delta}$ estimate of $\varphi_{t,x_0}$ and the $\mathcal C^{1,\beta}$ regularity of $u$, we obtain
\begin{align}
|W_t(x) -\langle \nabla \varphi_{t,x_0}(x_0),x\rangle|& \leq Ct^{\delta},\\
|V_t(x) -\langle \nabla u(x_0),x\rangle|& \leq Ct^{\beta},
\end{align}
where $\langle,\rangle$ is the standard inner product on the vector space $\mathbb{R}^{2n}$.
Then we have
\begin{equation}|\langle \nabla \varphi_{t,x_0}(x_0),x\rangle-\langle \nabla u(x_0),x\rangle|\leq Ct^{\beta},\end{equation}
on whole $B_{\frac{1}{4}d}(0)$, and
\begin{equation}|\nabla \varphi_{t,x_0}(x_0)-\nabla u(x_0)|\leq Ct^{\beta}.\end{equation}
This implies (\ref{clm11}).\\

Using (\ref{clm11}) and (\ref{vk1}) we have
\begin{align*}
|\nabla \varphi_{t,x_0}(x_0)-\nabla u(x_0)|& =\left| \textstyle{\sum\limits_{k=0}^{\infty}}(\nabla u_k-\nabla u_{k+1})\right|=\left| \textstyle{\sum\limits_{k=1}^{\infty}}\nabla v_k\right|\\
&\leq \textstyle{\sum\limits_{k=1}^{\infty}}|\nabla v_k|\leq C\textstyle{\sum\limits_{k=1}^{\infty}}t_k^{\delta},
\end{align*}
where
\begin{equation*}\textstyle{\sum\limits_{k=1}^{\infty}}t_k^{\delta}=t^{\delta}\textstyle{\sum\limits_{k=1}^{\infty}}2^{-k\delta}=t^{\delta}\frac{2^{-\delta}}{1-2^{-\delta}}.\end{equation*}
Then we have
\begin{equation}|\nabla \varphi_{t,x_0}(x_0)-\nabla u(x_0)| \leq Ct^{\delta}\leq C(dt)^{\delta}.\end{equation}
This completes the proof of (1) of Claim \ref{clm}.

\medskip

(3) of Claim \ref{clm} is essentially an estimate on $[v]_{1;B_{\frac{1}{8}dt}(z)}$ with $v=\varphi_{t,x_0}-\varphi_{t,y_0}$, which can be obtained by the same method that was used to estimate $[v_k]_{1;B_{k+2}}$. We omit the proof.

\end{proof}

\medskip

\hspace{0.3cm}

\section{Some further discussion}
\setcounter{equation}{0}

Let $\Omega\subset \mathbb{C}^n$ be a bounded open domain,  $f\in \mathcal{C}(\Omega)$ be a positive function and $u\in \mathcal{C}^{1,\beta}(\Omega)\cap PSH(\Omega)$ be a weak solution of the complex Monge-Amp\`ere
equation (\ref{CMA}).
In Theorem \ref{mthm}, assuming $f\in \mathcal C^{\alpha}(\Omega)$, we used auxiliary functions $\varphi_{t,x_0}$ which solve (\ref{vp1}), i.e. they satisfy
\begin{equation*}
\left\{
\begin{array}{cl}
\det((\varphi_{t,x_0})_{i\bar{j}})=f(x_0), & \textrm{in } B_{dt}(x_0),\\
\varphi_{t,x_0}=\rho_{dt^{\mu}}\!*\! u, & \textrm{on } \partial B_{dt}(x_0),
\end{array}
\right.
\end{equation*}
and proved that if $\beta>\beta_0(n,\alpha)$, then $u$ is in $\mathcal{C}^{1,1}(\Omega)$. In this section, we will show that
if $f$ has better regularity, for example $f \in \mathcal{C}^{2,1}(\Omega)$, then we can refine the auxiliary functions and then the assumption on $\beta$ can be weakened.

\medskip

If $f\in\mathcal{C}^{k,\alpha}(\Omega)$ for $k=1,2$ and $\alpha \in (0,1]$, we can use the following auxiliary functions $\varphi_{t,x_0}$ which satisfy
\begin{equation}\label{fka}
\left\{
\begin{array}{cl}
\det((\varphi_{t,x_0})_{i\bar{j}})=f_{k,x_0}, & \textrm{in } B_{dt}(x_0),\\
\varphi_{t,x_0}=\rho_{dt^{\mu}}\!*\! u, & \textrm{on } \partial B_{dt}(x_0),
\end{array}
\right.
\end{equation}
where $d\leq\dist(x_0,\Omega)$, $0<t\leq\frac{1}{2}$, $\mu>1$ and
\begin{align}&f_{1,x_0}(x)=f(x_0)+\langle\nabla f(x_0),x-x_0\rangle,\\
&f_{2,x_0}(x)=f(x_0)+\langle\nabla f(x_0),x-x_0\rangle+\frac{1}{2}(x-x_0)^T\nabla^2f(x_0)(x-x_0).
\end{align}
In fact, $f_{1,x_0}$ and $f_{2,x_0}$ are the first two terms and the first three terms of the Taylor expansion of $f$ respectively.
If $f\in \mathcal{C}^{2,1}(\Omega)$, then we simply use the following auxiliary functions
\begin{equation}\label{f21}
\left\{
\begin{array}{cl}
\det((\varphi_{t,x_0})_{i\bar{j}})=f, & \textrm{in } B_{d}(x_0),\\
\varphi_{t,x_0}=\rho_{dt}\!*\! u, & \textrm{on } \partial B_{d}(x_0),
\end{array}
\right.
\end{equation}
where $d\leq\frac{1}{2}\dist(x_0,\Omega)$ and $0<t\leq\frac{1}{2}$. Here,
$\varphi_{t,x_0}$ defined by (\ref{f21}) is generally not smooth. However, we can obtain necessary estimates on it by considering $\varphi_{t,x_0,\epsilon}\left(0<\epsilon\leq\frac{1}{2}\right)$ satisfying
\begin{equation}\label{f21*}
\left\{
\begin{array}{cl}
\det((\varphi_{t,x_0,\epsilon})_{i\bar{j}})=\rho_{d\epsilon}\!*\!f, & \textrm{in } B_{d}(x_0),\\
\varphi_{t,x_0}=\rho_{dt}\!*\! u, & \textrm{on } \partial B_{d}(x_0),
\end{array}
\right.
\end{equation}
and then letting $\epsilon\rightarrow 0^+$.

\medskip

Using these new auxiliary functions and the argument in the proof of Theorem \ref{mthm}, we can prove the following
theorem. Since the proof is exactly the same as that in Theorem \ref{mthm}, we omit
it.
\begin{theorem} \label{mthm+}
Let $\Omega\subset \mathbb{C}^n$ be a bounded open domain and $u\in \mathcal{C}^{1,\beta}(\Omega)\cap PSH(\Omega)$ be a weak solution of complex Monge-Amp\`ere equation (\ref{CMA}) with $f\in \mathcal{C}(\Omega)$ and $0<\lambda\leq f$ for some constant $\lambda$.\\
(1). If $f\in \mathcal C^{k,\alpha}(\Omega)$ and
\begin{equation}\beta>\beta(n,\tau)=1-\frac{\tau}{n(2+\tau)-2},\end{equation}
where $\alpha \in (0,1]$, $\tau=k+\alpha$ and  $k=1\ \textrm{or}\ 2$, then $u\in \mathcal C^{1,1}(\Omega)$.
Furthermore the $\mathcal C^{1,1}$ norm of $u$ in any relatively compact subset is estimable in terms of $n$, $k+\alpha$, $\beta$, $\lambda$, $\|u\|_{\mathcal C^{1,\beta}(\Omega)}$, $\|f\|_{\mathcal C^{k,\alpha}(\Omega)}$ and the distance of the set to $\partial\Omega$.\\
(2). If $f\in \mathcal C^{2,1}(\Omega)$ and $\beta>1-\frac{1}{n}$, then $u\in \mathcal C^{1,1}(\Omega)$.
Furthermore the $\mathcal C^{1,1}$ norm of $u$ in any relatively compact subset is estimable in terms of $n$, $\beta$, $\lambda$, $\|u\|_{\mathcal C^{1,\beta}(\O)}$, $\|f\|_{\mathcal C^{2,1}(\Omega)}$ and the distance of the set to $\partial\Omega$.
\end{theorem}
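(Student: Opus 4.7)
The plan is to rerun the proof of Theorem \ref{mthm} essentially verbatim, changing only the choice of auxiliary functions: $\varphi_{t,x_0}$ from (\ref{vp2}) is replaced by (\ref{fka}) for Part (1), or by (\ref{f21}) (through the smooth regularizations (\ref{f21*})) for Part (2). The key observation is that the only place in the argument sensitive to the regularity of $f$ is the comparison-principle sup-bound (\ref{3.9}); everything else---the Bedford--Taylor $\mathcal{C}^{1,1}$ estimate (\ref{3.10}), the Calabi third-derivative estimate (\ref{Lphi1}), the interpolation inequalities, and the fixed-point iteration of Section 3---depends only on $\mathcal{C}^{1,1}$ and log-derivative bounds for the right-hand side of the Dirichlet problem, which remain uniform in $x_0$ and $t$ provided $f_{k,x_0}\geq\lambda/2$ on $B_{dt}(x_0)$ (ensured for $t$ below some explicit $t_0=t_0(n,\lambda,\|f\|_{\mathcal{C}^{k,\alpha}})$).

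For Part (1), Taylor's theorem gives $|f(x)-f_{k,x_0}(x)|\leq C|x-x_0|^{\tau}$ with $\tau=k+\alpha$, hence $|f^{1/n}-f_{k,x_0}^{1/n}|\leq Ct^{\tau}$ on $B_{dt}(x_0)$ (using the positive lower bound). Redoing the barrier argument of Section 3 with this improved error sharpens (\ref{3.9}) to
\begin{equation*}
\sup_{B_{dt}(x_0)}|\varphi_{t,x_0}-u|\leq C(t^{\mu(1+\beta)}+t^{2+\tau}).
\end{equation*}
Propagating $\alpha\to\tau$ through (\ref{vk0})--(\ref{vk1d2}) converts the second constraint in the solvability system (\ref{mu0}) into
\begin{equation*}
\frac{1+\gamma-\delta}{2+\gamma}(2+\tau)+\frac{1+\delta}{2+\gamma}(\mu(\beta-1)(n+1)+2)-(1+\delta)>0,
\end{equation*}
and reworking the algebra of (\ref{mu1})--(\ref{b0}) yields the sharpened fixed-point exponent $\beta(n,\tau)=1-\tau/(n(2+\tau)-2)$. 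The inductive iteration used to deduce Theorem \ref{mthm} from Lemma \ref{klm} then promotes $\mathcal{C}^{1,\beta}$ to $\mathcal{C}^{1,1}$ whenever $\beta>\beta(n,\tau)$.

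For Part (2), (\ref{f21}) imposes the actual right-hand side $f$ on the auxiliary function, so the approximation error on the equation side disappears entirely and only the boundary-mollification error $Ct^{\mu(1+\beta)}$ survives in the analogue of (\ref{3.9}). The smooth regularizations $\varphi_{t,x_0,\epsilon}$ of (\ref{f21*}) satisfy the estimates of Propositions \ref{c2} and \ref{c3} uniformly in $\epsilon$ (since $\rho_{d\epsilon}\!*\!f\to f$ in $\mathcal{C}^{2,1}$) and pass to the limit $\epsilon\to 0^+$ by stability of weak solutions of the complex Monge--Amp\`ere equation. In the algebra of the key lemma, removing the $t^{2+\tau}$ term is equivalent to formally sending $\tau\to\infty$ in the formula for $\beta(n,\tau)$, which produces the threshold $\beta>1-1/n$.

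The main obstacle will be verifying uniformity of constants across the family $\{f_{k,x_0}\}_{x_0\in\Omega'}$: the lower bound $f_{k,x_0}\geq\lambda/2$ on $B_{dt}(x_0)$, the uniform control of $[f_{k,x_0}^{1/n}]_{1,1}$ and $[\log f_{k,x_0}]^{*}_{2}+[\log f_{k,x_0}]^{*}_{3}$ entering the hypotheses of Propositions \ref{c2} and \ref{c3}, and, for Part (2), the stability of the pointwise gradient and Hessian estimates used in Claim \ref{clm} under the regularization $\epsilon\to 0^+$. None is deep, but the bookkeeping has to be watertight to justify the authors' assertion that the proof of Theorem \ref{mthm} transfers without change.
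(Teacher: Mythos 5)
Your overall strategy is the one the paper intends, but the central algebraic claim in your argument for Part (1) does not check out, and this is not a bookkeeping nuisance but a genuine gap. If one replaces $\alpha$ by $\tau=k+\alpha$ in (\ref{vk0})--(\ref{vk1d2}) and then reruns the compatibility analysis of (\ref{mu0})--(\ref{bd0}) and solves for the fixed point of the resulting $\phi_{n,\tau}$, one obtains, by exactly the same manipulation that produced (\ref{b0}),
\begin{equation*}
\delta_0=\frac{(n-1)(2+\tau)+1}{\,n(2+\tau)-1\,}=1-\frac{\tau}{\,n(2+\tau)-1\,},
\end{equation*}
not the announced threshold $1-\tau/(n(2+\tau)-2)$. (Indeed the derivation of (\ref{b0}) from (\ref{phi}) makes no use of $\alpha<1$; it is a purely formal substitution.) Your sentence ``reworking the algebra yields the sharpened fixed-point exponent $\beta(n,\tau)=1-\tau/(n(2+\tau)-2)$'' is therefore an assertion you have not earned: the naive propagation $\alpha\to\tau$ loses a full unit in the denominator relative to what the statement claims. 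To close the gap you would have to identify a concrete additional improvement in the $k\geq 1$ case---somewhere the Bedford--Taylor estimate (\ref{3.10}), the Calabi bound (\ref{Lphi1}), or the Schauder/interpolation chain (\ref{vk2a})--(\ref{vk1d2}) must gain a factor of $t$, or the power $n+1$ coming from Proposition \ref{c3} must effectively drop---but you explicitly argue the opposite: that all of these steps are ``insensitive to the regularity of $f$.'' So either an extra gain exists and your proof does not find it, or the published constant requires an argument genuinely different from ``the proof of Theorem \ref{mthm}, verbatim.'' Either way the proposal is incomplete.

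Two smaller remarks. First, for Part (2) your ``formally send $\tau\to\infty$'' heuristic does land on the correct threshold $1-1/n$, but the actual Part (2) setup is geometrically different from Part (1): (\ref{f21}) poses the Dirichlet problem on the \emph{fixed} ball $B_d(x_0)$ with boundary data mollified at scale $dt$ (with $\mu=1$, not $\mu>1$), so the passage from weighted to unweighted H\"older norms no longer costs a factor $t_k^{-(1+\delta)}$ and the interpolation/iteration runs with one constraint rather than two. That this happens to agree with the formal $\tau\to\infty$ limit should be verified, not asserted; as it happens one can check the fixed-point equation $(1-\delta)(n+1)=2-\delta$ does give $\delta=1-1/n$, so Part (2) is fine, but the reasoning needs to be spelled out. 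Second, for Part (1) you correctly observe that one must ensure $f_{k,x_0}\geq\lambda/2$ on $B_{dt}(x_0)$ for $t$ small and that the $L$-term in Proposition \ref{c3} (now nonzero since $\log f_{k,x_0}$ is not constant) stays uniformly bounded; this part of your discussion is sound and is exactly the uniformity check that must be recorded.
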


\medskip

\hspace{0.3cm}

\section{Appendix}
\setcounter{equation}{0}

The third order estimate or so called the Calabi $\mathcal{C}^3$ estimate plays an important role in the study of higher order regularity of solutions of  complex Monge-Apm\`ere equation. The global case (on compact K\"ahler manifold) is due to Aubin (\cite{Au}) and Yau (\cite{Y}); the local case is due to Riebesehl and Schulz (\cite{SR}). There are many generalizations of the Calabi $\mathcal{C}^3$ estimate (see e.g. \cite{Chau,T,TWY,ZhZh,SW}).

\medskip

We first give an estimate for solutions of  complex Monge-Apm\`ere equation on a ball in $\mathbb{C}^n$. The proof is similar to the proof of Theorem 1.1.(i) of \cite{SW}.
\begin{theorem} \label{calabic3} Let $B_2=B_r(0)\subset \mathbb{C}^n$, and let $u \in PSH(B_2)\cup \mathcal C^5(B_2)$ satisfy
\begin{equation*}\det(u_{i\bar{j}})=e^{\varphi},\end{equation*}
on $B_2$, where $\varphi\in \mathcal C^3(B_2)$.
If there exist constants $\lambda$, $\Lambda$, $M$ and $N$ such that
\begin{equation}\label{S3}\lambda I_n\leq (u_{i\bar{j}})\leq \Lambda I_n,\end{equation}
\begin{equation} -Mr^{-2} I_n\leq(\varphi_{i\bar{j}})\leq Mr^{-2}I_n,\qquad \Big(\sum\limits_{i,j,k}|\varphi_{i\bar{j}k}|^2\Big)^{\frac{1}{2}}\leq Nr^{-3},\end{equation}
on $B_{2}$,
then we have
\begin{equation}\label{6.1}\sum\limits_{i,j,k}|u_{i\bar{j}k}|^2 \leq C(n)\lambda^{-1}\Lambda^3\left((1+M)^{-2}N^2+\lambda^{-1}\Lambda (1+M) \right)r^{-2},\end{equation}
on $B_1=B_{\frac{1}{2}r}(0)$.
\end{theorem}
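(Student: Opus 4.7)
The plan is to carry out Calabi's classical third-order computation, localized on $B_2$ by a cutoff function and the maximum principle, following \cite{SR}. First, the rescaling $\tilde u(y)=r^{-2}u(ry)$ preserves $\lambda,\Lambda,M,N$ and removes $r$ from both hypothesis (5.3) and the target (6.1), so I may assume $r=1$. Write $g_{i\bar{j}}=u_{i\bar{j}}$, let $g^{i\bar{j}}$ be its inverse, and let $\Delta_g=g^{i\bar{j}}\partial_i\partial_{\bar j}$. Introduce the Calabi third-order tensor
\[
S \;=\; g^{i\bar{q}}\,g^{p\bar{j}}\,g^{k\bar{r}}\,u_{p\bar{j}k}\,\overline{u_{q\bar{i}r}},
\]
which satisfies $\Lambda^{-3}\sum_{i,j,k}|u_{i\bar{j}k}|^2 \leq S \leq \lambda^{-3}\sum_{i,j,k}|u_{i\bar{j}k}|^2$. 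Consequently, a pointwise bound on $S$ on $B_1$ implies (6.1) after multiplication by $\Lambda^3$.

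Differentiating the Monge--Amp\`ere equation $\log\det g_{i\bar{j}}=\varphi$ and commuting covariant derivatives produces a pointwise inequality of the Calabi--Yau shape
\[
\Delta_g S \;\geq\; c_1(n)\,\lambda\Lambda^{-2}\,|\nabla u_{i\bar{j}k}|_g^{2}\;-\;c_2(n)\,\lambda^{-1}M\,S\;-\;c_3(n)\,\lambda^{-3}N^{2},
\]
while the standard Yau computation for $\tr g$ gives
\[
\Delta_g(\tr g)\;\geq\;c_4(n)\,\lambda^{2}\Lambda^{-2}\,S\;-\;c_5(n)\,\lambda^{-1}M.
\]
Both follow from the pinching $\lambda I\leq g\leq\Lambda I$ together with the assumed bounds $|\varphi_{i\bar j}|\leq M$ and $|\nabla\varphi_{i\bar j}|\leq N$, the curvature of $g$ being controlled by $\partial\bar\partial\varphi$ in view of the Monge--Amp\`ere equation.

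To localize, fix $\eta\in C_c^\infty(B_2)$ with $\eta\equiv 1$ on $B_1$, $0\leq\eta\leq 1$, and $|\nabla\eta|+|\nabla^2\eta|\leq C(n)$, and for a constant $A>0$ to be determined set
\[
F\;=\;\eta^{2}S\;+\;A\,\tr g.
\]
Let $x_0\in\overline{B_2}$ realize $\max F$. At $x_0$ one has $\nabla F(x_0)=0$ and $\Delta_g F(x_0)\leq 0$; plugging in the two inequalities above, using $\nabla F(x_0)=0$ to solve for $\nabla S$ in terms of $\nabla\eta$ and $\nabla(\tr g)$, and then absorbing the resulting cross terms into the positive fourth-order piece $\eta^2|\nabla u_{i\bar{j}k}|_g^{2}$ by Cauchy--Schwarz, one reduces to an algebraic inequality for $\eta^2(x_0)S(x_0)$. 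Choosing $A\sim\lambda^{-1}\Lambda^{2}(1+M)$ so that the $N^{2}$-term and the $M$-term balance yields $S(0)\leq C(n)\lambda^{-1}\bigl((1+M)^{-2}N^{2}+\lambda^{-1}\Lambda(1+M)\bigr)$, which becomes (6.1) after multiplication by $\Lambda^{3}$.

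The main obstacle is the careful bookkeeping of how $\lambda,\Lambda,M,N$ propagate through the three $g^{-1}$ contractions in $S$, through the commutators $[\nabla,\bar\nabla]$ in the Calabi identity (which generate curvature terms bounded by $M$), and through the absorption step; in particular, the precise combination $(1+M)^{-2}N^{2}+\lambda^{-1}\Lambda(1+M)$ is sensitive to the exact form of the auxiliary function (for instance $\eta^{2}(A+\tr g)S$ versus $\eta^{2}S+A\,\tr g$) and to the optimal choice of $A$. Obtaining the $(1+M)^{-2}$ weight on $N^{2}$ is the delicate point and is exactly what makes the claimed estimate sharper than the generic Calabi bound.
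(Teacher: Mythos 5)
Your strategy is the same as the paper's (which in turn follows Riebesehl--Schulz): introduce Calabi's third-order quantity $S$, derive differential inequalities for $\Delta_g S$ and $\Delta_g(\Delta u)=\Delta_g(\tr g)$, localize with a cutoff, run the maximum principle on $\eta^2 S + A\,\tr g$, and conclude via $\sum|u_{i\bar j k}|^2\le \Lambda^3 S$. The auxiliary function, the choice of $A\sim(1+M)$, and the absorption of the $\langle d\eta,dS\rangle_g$ cross term into the positive fourth-order piece are all the same moves the paper makes.

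However, there is a genuine gap in the differential inequality you state for $\Delta_g S$, and it is exactly at the point you yourself flag as delicate. You write the $N$-contribution as a constant term $-c_3(n)\lambda^{-3}N^2$, but the Calabi identity produces a \emph{mixed} term: from the paper's (6.6), the relevant contributions are $\varphi^{lm}_k u^k_{lm}+\varphi^k_{lm}u^{lm}_k$, which by Cauchy--Schwarz give a bound of the form $-C\lambda^{-3/2}N\,S^{1/2}$, linear in $S^{1/2}$, not constant. If you absorb this prematurely into $\lambda^{-3}N^2$ (i.e.\ apply Young with fixed weight), the algebra at the interior maximum degenerates to a \emph{linear} inequality in $S$, namely $aS - c\le 0$ with $a\sim(1+M)$ and $c\sim N^2 + A M$, which yields $S\lesssim (1+M)^{-1}N^2 + M$ --- one power of $(1+M)$ short of (6.1). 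The paper keeps the $S^{1/2}$ dependence all the way to the maximum point (its inequality (\ref{Sx0})), then solves the resulting quadratic in $S^{1/2}$: with $a\sim(1+M)$, $b\sim N$, $c\sim(1+M)M$, the bound $S\lesssim (b/a)^2 + c/a$ produces precisely $(1+M)^{-2}N^2 + M$. To repair your argument, either retain the $N S^{1/2}$ term until after the maximum principle and solve the quadratic, or carry a free Young parameter $\varepsilon$ in $-N S^{1/2}\geq -\varepsilon S - C\varepsilon^{-1}N^2$ and choose $\varepsilon\sim(1+M)$ at the end; both routes recover the $(1+M)^{-2}$ weight. The remaining steps (cutoff, $|dS|_g^2\le 4ST$ for the absorption, choice of $A$) are sound as outlined.
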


\medskip

\begin{proof}[Proof.]We denote the K\"ahler metric associated with the K\"ahler form $\sqrt{-1}\partial\bar{\partial}u$ by $g$, and the inverse matrix of $(u_{i\bar{j}})$ by $(u^{i\bar{j}})$.  Similar to \cite{SR}, we introduce the upper indexes, writing in the form
\begin{equation*}v^i=u^{i\bar{j}}v_{\bar{j}},\qquad v^{\bar{i}}=u^{j\bar{i}}v_{j}.\end{equation*}
Set
\begin{equation}\label{S2} S=u^{ik}_j u^j_{ik}.\end{equation}
Denote the standard complex Laplacian by $\Delta$ and the complex Laplacian associated with $g$ by $\Delta_g$. As that in \cite{SR}, we can obtain
\begin{equation}
\begin{aligned}\label{lS}
&\Delta_g \Delta u= u^i_{jk} u^j_{i\bar{k}}+\Delta \varphi,\\
&\Delta_g S =T+\varphi^{lm}_k u^k_{lm}+\varphi^k_{lm} u^{lm}_k-\varphi^p_k\left(u^{lm}_p u^k_{lm}+2u^l_{pm} u^{km}_l\right),
\end{aligned}
\end{equation}
where
\begin{equation}
\begin{aligned}
T=&(u_{ki}^{lm}-u_{pi}^l u_k^{pm}-u_{pi}^m u_k^{lp})(u^{ki}_{lm}-u^{qi}_l u^k_{pm}-u^{pi}_m u^k_{lp})\\
&+(u_{lmi}^k-u_{pi}^k u_{lm}^p)(u^{lmi}_k-u^{pi}_k u^{lm}_p).
\end{aligned}
\end{equation}
Direct calculation shows that
\begin{equation}S_i=u^k_{lm}(u_{ki}^{lm}-u_{pi}^l u_k^{pm}-u_{pi}^m u_m^{kl})+u_k^{lm}(u_{lmi}^k-u_{pi}^k u_{lm}^p).\end{equation}
Diagonalizing $(u_{i\bj})$ at the considered point, it is easy to check that $T\geq 0$. Moreover, applying Cauchy's inequality, we can  obtain
\begin{equation}\label{dS}|dS|_g^2=2S_i S^i\leq 4ST.\end{equation}
Together with the conditions on $(u_{i\bar{j}})$ and $\varphi$, we see that (\ref{lS}) implies
\begin{equation}\label{C300} \begin{split} & \Delta_g \Delta u\geq \lambda S-nMr^{-2},\\
& \Delta_g S \geq T-2\lambda^{-\frac{3}{2}}Nr^{-3}S^{\frac{1}{2}} -3\lambda^{-1}Mr^{-2}S.
\end{split}
\end{equation}
Let $\xi=(r_0^2-|z|^2)^2$ and $\eta =\xi^2$ for any positive $r_0<r$, we define
\begin{equation}G=\eta S+A\lambda^{-1}\Delta u,\end{equation}
on $\bar{B}_{r_0}(0)$. It follows from  (\ref{C300}) and (\ref{dS}) that
\begin{equation}
\begin{aligned}\Delta_g G \geq & (A+\Delta_g \eta-3\lambda^{-1}Mr^{-2}\eta)S+\eta T-2\lambda^{-\frac{3}{2}}Nr^{-3}\eta S^{\frac{1}{2}}\\
& -A\lambda^{-1}nMr^{-2}+\langle d\eta,dS\rangle_g,
\end{aligned}
\end{equation}
and
\begin{equation}|\langle d\eta,dS \rangle_g|\leq |d\eta|_g|dS|_g\leq 2|d\eta|_g(ST)^{\frac{1}{2}}\leq \eta^{-1}|d\eta|_g^2S+\eta T.\end{equation}
By the definition of $\eta$, we have
\begin{equation}\Delta_g \eta=2\xi \Delta_g \xi+|d\xi|_g^2 \geq -2n\lambda^{-1} r_0^2+|d\xi|_g^2,\end{equation}
\begin{equation}\eta^{-1}|d\eta|_g^2=4|d\xi|_g^2\leq 16\lambda^{-1} r_0^2,\end{equation}
and then
\begin{equation}
\Delta_g G \geq (A-C_1 (1+M)\lambda^{-1}r_0^{2})S-C_2 \lambda^{-\frac{3}{2}}Nr_0 S^{\frac{1}{2}}-C_3 A\lambda^{-1}M,
\end{equation}
where $C_1$, $C_2$, $C_3$ and following $C_4$, ... are constants depending only on $n$.
Let $A=2C_1(1+M)\lambda^{-1}r_0^2$, we have
\begin{equation}\label{c3i}
\Delta_g G \geq C_1 (1+M)\lambda^{-1}r_0^{2}S-C_2 \lambda^{-\frac{3}{2}}Nr_0S^{\frac{1}{2}}-C_4 \lambda^{-2}(1+M)M.\end{equation}

\smallskip

By the definition, we can suppose that $G$ achieves its maximum at some interior point, i.e. there exists a point $x_0\in B_{r_0}(0)$ such that
\begin{equation}
G(x_{0})=\max_{x\in \bar{B}_{r_0}(0) } G(x).
\end{equation}
By (\ref{c3i}),  we have
\begin{equation}\label{Sx0}
C_1 (1+M)\lambda^{-1}r_0^{2}S-C_2 \lambda^{-\frac{3}{2}}Nr_0S^{\frac{1}{2}}-C_4 \lambda^{-2}(1+M)M\leq 0,\end{equation}
at the interior maximum point $x_0$, and then
\begin{equation}S(x_0)\leq C_5\lambda^{-1}\left((1+M)^{-2}N^2+M \right)r_0^{-2},\end{equation}
\begin{equation}\label{G}G(x_0)\leq C_6\lambda^{-1}\left((1+M)^{-2}N^2+\lambda^{-1}\Lambda (1+M) \right)r_0^{2}.
\end{equation}
Clearly (\ref{G}) implies
\begin{equation}\max_{x\in \bar{B}_{r_0}(0) }\eta S (x)\leq C_6\lambda^{-1}\left((1+M)^{-2}N^2+\lambda^{-1}\Lambda (1+M) \right)r_0^2.
\end{equation}
Letting $r_0=\frac{3}{4}r$, we obtain
\begin{equation}\max_{x\in B_1}S(x) \leq C_7 \lambda^{-1}\left((1+M)^{-2}N^2+\lambda^{-1}\Lambda (1+M) \right)r^{-2}.
\end{equation}
On the other hand,  (\ref{S2}) and (\ref{S3}) yield
$\sum\limits_{i,j,k}|u_{i\bar{j}k}|^2\leq \Lambda^3 S$,
so  the inequality (\ref{6.1}) follows. This concludes the proof of Theorem \ref{calabic3}.

\end{proof}

\hspace{0.3cm}

Proposition \ref{c3} is a direct corollary of Theorem \ref{calabic3}.
\begin{proof}[Proof of Proposition \ref{c3}]
By the conditions $\Delta u\leq K$ and $f\geq m$, we have
\begin{equation}mK^{-n+1} I_n\leq (u_{i\bar{j}})\leq KI_n,\end{equation}
on $\O$.
Setting $\varphi=\log f$, it is clear that
 $[\log f]^*_{2;\Omega}+[\log f]^*_{3;\Omega}\leq L$ implies
\begin{equation} -Lr^2 I_n\leq(\varphi_{i\bj})\leq Lr^2 I_n,\qquad \Big(\sum\limits_{i,j,k}|\varphi_{i\bar{j}k}|^2\Big)^{\frac{1}{2}}\leq Lr^{-3},\end{equation} on $B_r(x)$, where  $x\in\Omega$ and  $r=\frac{1}{2}d_x$. Applying Theorem \ref{calabic3}, we have
\begin{equation}\Big(\sum\limits_{i,j,k}|u_{i\bar{j}k}|^2\Big)^{\frac{1}{2}}\leq C(n)K^{n+1}m^{-1}(1+L^{\frac{1}{2}})r^{-1},\end{equation}
on $B_{\frac{1}{2}r}(x)$.
The inequality above implies
\begin{equation}d_x|\nabla u_{i\bj}(x)|\leq C(n)K^{n+1}m^{-1}(1+L^{\frac{1}{2}}),
\end{equation}
for any $x\in \O$. This concludes the proof of Proposition \ref{c3}.
\end{proof}

\hspace{0.4cm}

\hspace{0.3cm}

\end{document}